\newtheorem{theorem}{Theorem}
\newtheorem{theo}{Theorem}
\newtheorem{corollary}{Corollary}
\newtheorem{proposition}{Proposition}
\newtheorem{lemma}{Lemma}
\newtheorem{rem}{Remark}
\newtheorem{defi}{Definition}
\newcommand{\p}{\Bbb{P}}
\newcommand{\px}{\Bbb{P}_x}
\newcommand{\e}{\Bbb{E}}
\newcommand{\ind}{\mbox{\rm 1\hspace{-0.04in}I}}
\newcommand{\ed}{\stackrel{(d)}{=}}
\title[Inversion, duality and Doob $h$-transforms for ssMp's]
{Inversion, duality and Doob $h$-transforms for self-similar Markov processes}
\author{L.~Alili \and L.~Chaumont \and P.~Graczyk \and T. \.Zak}
\address{L.~Alili -- Department of
Statistics, The University of Warwick, CV4 7AL, Coventry, UK.}
\email{L.Alili@warwick.ac.uk}
\address{L. Chaumont -- LAREMA UMR CNRS 6093, Universit\'e d'Angers, 2, Bd Lavoisier \\
Angers Cedex 01, 49045, France}
\email{loic.chaumont@univ-angers.fr}
\address{P.~Graczyk  -- LAREMA UMR CNRS 6093, Universit\'e d'Angers, 2, Bd Lavoisier \\
Angers Cedex 01, 49045, France}
\email{piotr.graczyk@univ-angers.fr}
\address{T. \.Zak  --  Faculty of Pure and Applied Mathematics, Wroc{\l}aw University of
Technology, Wybrze\.ze Wyspia\'nskiego 27, 50-370 Wroc{\l}aw, Poland.}
\email{tomasz.zak@pwr.edu.pl}
\keywords{Self-similar Markov processes, Markov additive processes, time change, inversion, duality, Doob $h$-transform.}
\subjclass[2010]{60J45}
\date{\today}
\begin{document}

\begin{abstract}  We show that any $\mathbb{R}^d\setminus\{0\}$-valued self-similar Markov process $X$, with index $\alpha>0$  
can be represented as a path transformation of some Markov additive process (MAP) $(\theta,\xi)$ in $S_{d-1}\times\mathbb{R}$. 
This result extends the well known Lamperti transformation. Let us denote by $\widehat{X}$ the self-similar Markov process
which is obtained from the MAP $(\theta,-\xi)$ through this extended Lamperti transformation. Then we prove that $\widehat{X}$
is in weak duality with $X$, with respect to the measure $\pi(x/\|x\|)\|x\|^{\alpha-d}dx$, if and only if $(\theta,\xi)$ is reversible with 
respect to the measure $\pi(ds)dx$, where $\pi(ds)$ is some $\sigma$-finite measure on $S_{d-1}$ and $dx$ is the Lebesgue 
measure on $\mathbb{R}$. Besides, the dual process $\widehat{X}$ has the same law as the inversion  
$(X_{\gamma_t}/\|X_{\gamma_t}\|^2,t\ge0)$  of $X$, where $\gamma_t$ is the inverse of $t\mapsto\int_0^t\|X\|_s^{-2\alpha}\,ds$. 
These results allow us to obtain excessive functions for some classes of self-similar Markov processes such as stable L\'evy 
processes.
\end{abstract}

\maketitle

\section{Introduction}

There exist many ways to construct the three dimensional Bessel process from Brownian motion. It is generally defined as the 
strong solution of a stochastic differential equation driven by Brownian motion or as the norm of the three dimensional Brownian motion. 
It can also be obtained by conditioning Brownian motion to stay positive. Then there are several path transformations.  Let us focus on 
the following example.  

\begin{theo}[M. Yor, \cite{yo}] \label{6214}
Let $\{(B^0_t)_{t\ge0},\p_x\}$ and $\{(R_t)_{t\ge0},{\rm P}_x\}$, $x>0$, be respectively the standard Brownian motion absorbed at $0$ 
and the three dimensional Bessel process. Then $\{(R_t)_{t\ge0},{\rm P}_x\}$ can be constructed from $\{(B^0_t)_{t\ge0},\p_x\}$ through 
the following path transformation: 
\[\{(R_t)_{t\ge0},{\rm P}_x\}=\{(1/B^0_{\gamma_t})_{t\ge0},\p_{1/x}\},\]
where $\gamma_t=\inf\{s:\int_0^s\frac{du}{(B_u^0)^4}>t\}$.
\end{theo}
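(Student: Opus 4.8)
The plan is to recognise, after inversion and a time change, the stochastic differential equation of the three-dimensional Bessel process. Working under $\p_{1/x}$, I write $B^0$ for Brownian motion started at $1/x$ and absorbed at its first hitting time $T_0=\inf\{t:B^0_t=0\}$ of $0$, and I set $Y_t=1/B^0_t$ for $t<T_0$. Applying It\^o's formula to the map $y\mapsto1/y$, which is $C^2$ on $(0,\infty)$, gives
\[
dY_t=-\frac{1}{(B^0_t)^2}\,dB^0_t+\frac{1}{(B^0_t)^3}\,dt=-Y_t^2\,dB^0_t+Y_t^3\,dt .
\]
Thus the continuous local martingale part $M_t=-\int_0^tY_u^2\,dB^0_u$ has quadratic variation $\langle M\rangle_t=\int_0^tY_u^4\,du=\int_0^t(B^0_u)^{-4}\,du=:A_t$, which is exactly the additive functional whose right-continuous inverse is $\gamma$.

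The step I expect to be the main obstacle is the analytic fact that $A_{T_0-}=\int_0^{T_0}(B^0_u)^{-4}\,du=+\infty$ almost surely. This is what guarantees that $\gamma_t<T_0$ for every $t\ge0$, so that $t\mapsto\gamma_t$ maps $[0,\infty)$ into $[0,T_0)$ and the inverted, time-changed process is well defined and non-exploding for all times; on the Bessel side this is the statement that the three-dimensional Bessel process never returns to $0$. I would establish the divergence by the occupation-times formula $\int_0^{T_0}f(B^0_u)\,du=\int_{\R}f(a)\,L^a_{T_0}\,da$ applied to $f(a)=a^{-4}$, using that near the target level $0$ the local time $a\mapsto L^a_{T_0}$ behaves like a squared Bessel process of dimension two and hence vanishes only linearly, so that $\int_{0+}a^{-4}L^a_{T_0}\,da=+\infty$; alternatively one may compare with the classical divergence of $\int_0^{T_0}(B^0_u)^{-2}\,du$.

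Granting this, I carry out the time change. By the Dambis--Dubins--Schwarz theorem the process $\beta_t:=M_{\gamma_t}$ is a standard Brownian motion, since $\langle M_{\gamma_\cdot}\rangle_t=A_{\gamma_t}=t$. Writing $Z_t:=Y_{\gamma_t}=1/B^0_{\gamma_t}$ and substituting $u=\gamma_s$, so that $du=d\gamma_s=Z_s^{-4}\,ds$ because $\gamma$ inverts $A$ and $(B^0_{\gamma_s})^{-4}=Z_s^4$, the finite-variation term transforms according to
\[
\int_0^{\gamma_t}Y_u^3\,du=\int_0^tZ_s^3\,Z_s^{-4}\,ds=\int_0^t\frac{ds}{Z_s}.
\]
Therefore
\[
dZ_t=d\beta_t+\frac{dt}{Z_t},\qquad Z_0=1/B^0_0=x,
\]
which is precisely the stochastic differential equation of the three-dimensional Bessel process started at $x$. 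Since this equation has a unique strong solution taking values in $(0,\infty)$, the law of $(Z_t)_{t\ge0}$ under $\p_{1/x}$ coincides with ${\rm P}_x$, which is the claim. As a consistency check I would finally note that the transformation is an involution: the function $y\mapsto1/y$ is harmonic for the Bessel generator, so $1/R$ is a continuous local martingale, and the same It\^o-plus-time-change computation run in reverse recovers the absorbed Brownian motion together with the stated inverse time change.
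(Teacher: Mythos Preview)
Your proof is correct. The It\^o computation is right, the time change is carried out correctly via Dambis--Dubins--Schwarz, and you land on the SDE $dZ_t=d\beta_t+Z_t^{-1}\,dt$ with $Z_0=x$, which pins down the law of BES$(3)$ by pathwise uniqueness. Your handling of the one genuinely analytic point, $A_{T_0-}=\infty$, is adequate: the comparison with $\int_0^{T_0}(B^0_u)^{-2}\,du$ is the clean way to close it (near $T_0$ one has $B^0_u<1$, hence $(B^0_u)^{-4}\ge(B^0_u)^{-2}$, and the latter integral is a.s.\ infinite since it is the lifetime of the Lamperti L\'evy process of absorbed Brownian motion, namely Brownian motion with drift $-1/2$, which is unkilled). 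The Ray--Knight sketch is also viable but would need a little more care than ``vanishes only linearly'' to be airtight.

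This is, however, a genuinely different route from the paper's. The paper does not argue by stochastic calculus at all: it recovers Theorem~\ref{6214} as the Brownian specialisation of its general duality machinery. In that framework absorbed Brownian motion is a positive ssMp with $\alpha=2$; Theorem~\ref{5372} shows that the inversion-plus-time-change $\widehat{X}$ is the ssMp attached to the negated Lamperti L\'evy process and is in duality with $X$ with respect to $x^{\alpha-1}\,dx=x\,dx$; since absorbed Brownian motion is also self-dual with respect to Lebesgue measure, Lemma~\ref{2789} forces $\widehat{X}$ to be the Doob $h$-transform of $X$ with $h(x)=x$, i.e.\ BES$(3)$. Concretely this is the $\alpha=2$, spectrally symmetric instance of Corollary~\ref{4568} and the remark following it. Your approach is shorter and entirely elementary for this particular statement; the paper's approach explains \emph{why} the transformation produces an $h$-process and places the result inside a family that includes isotropic and one-dimensional stable processes, free Bessel processes, and Dunkl processes.
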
      
\noindent This result was actually obtained in higher dimension in \cite{yo} where the law 
of the time changed inversion of $d$-dimensional Brownian motion is fully described. 
Recalling that three dimensional Bessel process is a Doob $h$-transform of Brownian motion absorbed at 0, the following result 
can be considered as a counterpart of Theorem \ref{6214} for isotropic stable L\'evy processes. 

\begin{theo}[K. Bogdan, T. \. Zak, \cite{bz}]\label{4503}
Let $\{(X_t)_{t\ge0},\p_x\}$, $x\in\mathbb{R}^d\setminus\{0\}$ be a $d$-dimensional, isotropic stable L\'evy process with index
$\alpha\in(0,2]$, which is absorbed at its first hitting time of $0$. Then the process
\begin{equation}\label{8422}
\{(X_{\gamma_t}/\|X\|^2_{\gamma_t})_{t\ge0},\p_{x/\|x\|^2}\}\,,
\end{equation}
where $\gamma_t=\inf\{s:\int_0^s\frac{du}{\|X\|_u^{2\alpha}}>t\}$, 
is the Doob $h$-transform of $X$ with respect to the positive harmonic function $x\mapsto\|x\|^{\alpha-d}$.
\end{theo}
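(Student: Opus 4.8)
The plan is to establish the claimed identity in law by exploiting the interplay between the time-changed inversion and the Doob $h$-transform. Let me sketch how I would prove Theorem \ref{4503}.

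The plan is to verify that the inverted, time-changed process has the same semigroup as the $h$-transformed process, where $h(x)=\|x\|^{\alpha-d}$. First I would recall why $h(x)=\|x\|^{\alpha-d}$ is indeed harmonic (more precisely, invariant) for the isotropic $\alpha$-stable process $X$ killed at $0$: this is the well-known Riesz kernel / $\alpha$-harmonicity of $\|x\|^{\alpha-d}$ away from the origin, which follows from the fact that $\|x\|^{\alpha-d}$ is, up to a constant, the Green function (for $\alpha<d$) or can be checked directly via the isotropy and scaling of the generator. Given this, the Doob $h$-transform $\p^h_x$ is well defined, with transition densities $p^h_t(x,y)=\frac{h(y)}{h(x)}p_t(x,y)$, where $p_t$ is the killed semigroup density.

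Next I would compute the semigroup of the time-changed inversion. Write $\iota(x)=x/\|x\|^2$ for the spherical inversion and set $Y_t=\iota(X_{\gamma_t})$ under $\p_{\iota(x)}$, where $\gamma_t$ is the inverse of the additive functional $A_s=\int_0^s\|X\|_u^{-2\alpha}\,du$. The key computational step is to transfer the time change $\gamma_t$ onto the transition densities: by the general theory of time changes by the inverse of an additive functional, the process $X_{\gamma_t}$ has transition kernel obtained by a multiplicative functional adjustment, and composing with $\iota$ and using the change of variables for the Lebesgue density under $\iota$ (whose Jacobian is $\|x\|^{-2d}$) should produce the factor $\|y\|^{\alpha-d}/\|x\|^{\alpha-d}$ precisely. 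This is where the isotropy and the self-similarity of index $\alpha$ enter decisively: the scaling relation $p_t(cx,cy)=c^{-d}p_{c^{-\alpha}t}(x,y)$ together with the inversion symmetry of the isotropic stable kernel, namely a relation of the form $p_t(\iota(x),\iota(y))=\|x\|^{?}\|y\|^{?}\,p_{\tilde t}(x,y)$ after the appropriate time substitution, is the engine that converts the geometric inversion into the analytic $h$-transform.

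The hard part will be making the inversion symmetry of the stable transition density rigorous and matching it exactly with the time change $\gamma_t$. Concretely, I expect the crux to be the identity relating $p_t$ evaluated at inverted points to $p$ evaluated at the original points, integrated against the time change $A_s$; one must show that the random time change $\gamma_t$ is exactly what is needed so that the Jacobian factor $\|x\|^{-2d}$ from $\iota$ combines with the stable scaling to leave the clean multiplicative factor $h(y)/h(x)$. I would carry this out by first treating the transition densities (using the explicit Fourier/scaling structure of the isotropic stable process), then confirming that the absorption at $0$ is respected under inversion — noting that $0$ and $\infty$ swap roles under $\iota$, so one must check the boundary behaviour and that the $h$-transformed process is conservative on $\mathbb{R}^d\setminus\{0\}$ in the regime where $\|x\|^{\alpha-d}$ is genuinely harmonic. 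Once the densities agree, the identity of the finite-dimensional distributions, and hence the equality of the two laws, follows by the Markov property.
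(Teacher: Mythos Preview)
Your approach is \emph{not} the one the paper takes. The paper does not touch the transition densities of the isotropic stable process at all. Instead it deduces Theorem~\ref{4503} as a special case of the following abstract chain: (i) by Theorem~\ref{2461}, the isotropic stable process $\{X,\p_x\}$ corresponds to a MAP $\{(\theta,\xi),P_{y,z}\}$; (ii) by part~3 of Proposition~\ref{3419}, isotropy forces the MAP to be reversible with respect to $dy\,dz$ on $S_{d-1}\times\mathbb{R}$; (iii) by Theorem~\ref{5372}, reversibility is equivalent to $\{X,\p_x\}$ and the time-changed inversion $\{\widehat{X},\widehat{\p}_x\}$ being in weak duality with respect to $\|x\|^{\alpha-d}\,dx$; (iv) since the isotropic stable process is also in duality with \emph{itself} with respect to Lebesgue measure $dx$, Lemma~\ref{2789} converts these two dualities into the statement that $\{\widehat{X},\widehat{\p}_x\}$ is the Doob $h$-transform of $\{X,\p_x\}$ for $h(x)=\|x\|^{\alpha-d}/1=\|x\|^{\alpha-d}$. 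This is part~2 of Corollary~\ref{1511}. What the paper's route buys is generality: exactly the same argument, with the self-duality replaced by duality with $\{-X,\p_{-x}\}$, yields the one-dimensional non-symmetric case (part~1 of Corollary~\ref{1511}), and the same template handles free Bessel and Dunkl processes. Your direct-density route is essentially the original argument of Bogdan--\.Zak and is specific to the isotropic stable kernel.

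There is also a genuine gap in your sketch as written. The entire argument hinges on the ``inversion symmetry of the stable transition density'' that you allude to --- a precise identity relating $p_t(\iota(x),\iota(y))$ to $p_{\cdot}(x,y)$ after the random time change --- but you neither state nor prove it; you only say you ``expect'' it to hold and that making it rigorous is ``the hard part''. That identity \emph{is} the theorem: scaling alone gives $p_t(cx,cy)=c^{-d}p_{c^{-\alpha}t}(x,y)$, but inversion is not a dilation, and there is no pointwise relation of the form $p_t(\iota(x),\iota(y))=\|x\|^{a}\|y\|^{b}p_{\tilde t}(x,y)$ for a deterministic $\tilde t$. The correct statement is at the level of semigroups after the additive-functional time change, and establishing it requires either the potential-theoretic computation in \cite{bz} or precisely the duality machinery the paper develops. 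As it stands, your proposal identifies the right objects but defers the only nontrivial step.
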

\noindent When $d=1$ and $\alpha>1$, Yano \cite{ya} showed that the $h$-process which is involved in Theorem \ref{4503} can
be interpreted as the L\'evy process $\{(X_t)_{t\ge0},\p_x\}$, conditioned to avoid 0, see also Pant\'{\i} \cite{pa}. Then recently 
Kyprianou \cite{ky} proved that Theorem \ref{4503} is actually valid for any real valued stable L\'evy process. 

Comparing Theorems \ref{6214} and \ref{4503}, we notice that they are concerned with the same path transformation of 
some Markov process, and that the resulting Markov process can be obtained as a Doob $h$-tranform of the initial process. Then 
one is naturally tempted to look for a general principle which would allow us to prove an overall result in an appropriate framework. 
It clearly appears that the self-similarity property is essential in these path transformations. Therefore a first step in our 
approach was an  indepth study of the structure of self-similar Markov processes. This led us to an extension of the famous 
Lamperti representation. The latter is the object of the next section, see Theorem \ref{2461}, and represents one of our main 
results. It asserts that any self-similar Markov process absorbed at 0 can be represented as a time changed Markov additive 
process and actually provides a one-to-one relationship between these two classes of processes.

  Then Section 3 is devoted to the study of the time changed inversion (\ref{8422}) when $\{(X_t)_{t\ge0},\p_x\}$ is any self-similar 
Markov process absorbed at 0. Another important step in our reasoning is the characterisation, in Theorem \ref{5372}, of self-similar 
Markov processes $\{(X_t)_{t\ge0},\p_x\}$, which are in duality with the time changed inversion 
$\{(X_{\gamma_t}/\|X\|^2_{\gamma_t})_{t\ge0},\p_{x/\|x\|^2}\}$. We show that a necessary and sufficient condition for this to hold is that 
the underlying Markov additive process in the Lamperti representation satisfies a condition of reversibility. Some important classes 
of processes satisfying this condition are also described. The results of this section extend those obtained by Graversen and 
Vuolle-Apiala in \cite{gv}.
 
It remains to appeal to some link between duality and Doob $h$-transform. More specifically in Section \ref{7325}, we recover 
Theorems \ref{6214} and \ref{4503}, and Theorem 4 in \cite{ky} as consequences Theorem \ref{5372} in Section 3 and the simple
observation that if two Markov processes are in duality between themselves then it is also the case for their Doob $h$-transforms. 
This general principle actually applies to large classes of self-similar Markov processes and allows us to obtain excessive functions 
attached to them. We end the paper by reviewing the examples of contitioned stable L\'evy processes, free 
Bessel processes and Dunkl processes. Let us finally emphasize that another incentive for our work was the recent paper from
Alili, Graczyk and \.Zak \cite{agz}, where some relationships between inversions and $h$-processes are provided in the 
framework of diffusions.

\section{Lamperti representation of $\mathbb{R}^d\setminus\{0\}$-valued ssMp's}\label{sec1}

All Markov processes considered in this work are standard processes for which there is a reference measure. Let us first briefly recall 
these definitions from Section I.9 and Chapter V of \cite{bg}. A standard Markov process $Z=(Z_t)_{t\ge0}$ is a strong Markov process, 
with values in some state space $E_\delta=E\cup\{\delta\}$, where $E$ is a locally compact space with a countable base, $\delta$ is 
some isolated extra state and $E_\delta$ is endowed with its topological Borel $\sigma$-field. The process $Z$ is defined on some
completed, filtered probability space $(\Omega,\mathcal{F},(\mathcal{F}_t)_{t\ge0},(P_x)_{x\in E_\delta})$, where $P_x(Z_0=x)=1$, 
for all $x\in E_\delta$.The state $\delta$ is absorbing, that is $Z_t=\delta$, for all $t\ge\zeta(Z):=\inf\{t:Z_t=\delta\}$ and 
$\zeta(Z)$ will be called the lifetime of $Z$. The paths of $Z$ are assumed to be right continuous on $[0,\infty)$. Besides, 
they have left limits and are quasi-left continuous on $[0,\zeta)$. Finally, we assume that there is a reference measure, that 
is a $\sigma$-finite measure $\mu(dy)$ on $E$ such that for each $x\in E$, the potential measure 
$E_x(\int_0^\zeta \ind_{\{Z_t\in dy\}}\,dt)$ of $Z$ is equivalent to $\mu(dy)$. We will generally omit to mention 
$(\Omega,\mathcal{F},(\mathcal{F}_t)_{t\ge0})$ and in what follows, a process satisfying the above properties will be denoted 
by $\{Z,P_x\}$ and will simply be referred to as an $E$-valued Markov process absorbed at $\delta$. (Note that absorbtion may
or may not hold with positive probability.)\\

In all this work, we fix an integer $d\ge1$ and we denote by $\|x\|$ the Euclidean norm of $x\in\mathbb{R}^d$. We also denote by 
$S_{d-1}$ the sphere of $\mathbb{R}^d$, where $S_{d-1}=\{-1,+1\}$ if $d=1$. Let $H$ be a locally compact subspace of
$\mathbb{R}^d\setminus\{0\}$. An $H$-valued Markov process $\{X,\p_x\}$ absorbed at 0, which satisfies the following 
scaling property: there exists an index $\alpha\ge0$ such that for all $a>0$ and $x\in H$,
\begin{equation}\label{8452}
\{X,\p_{x}\}=\{(aX_{a^{-\alpha }t},\,t\ge0),\p_{a^{-1}x}\}\,,
\end{equation}
is called an $H$-valued self-similar Markov process (ssMp for short). The scaling property implies in particular that $H$ 
should satisfy $H=aH$, for any $a>0$. Therefore the space $H$ is necessarily a cone of $\mathbb{R}^d\setminus\{0\}$, that is a 
set of the form 
\begin{equation}\label{3456}
H:=\phi(S\times \mathbb{R)}\,,
\end{equation}
where $S$ is some locally compact subspace of $S_{d-1}$ and $\phi$ is the homeomorphism, 
$\phi:S_{d-1}\times \mathbb{R}\rightarrow\mathbb{R}^d\setminus\{0\}$ defined by $\phi(y,z)=ye^z$. Henceforth, 
$H$ and $S$ will be any locally compact subspaces of $\mathbb{R}^d\setminus\{0\}$ and $S_{d-1}$ respectively, 
which are related to each other by $(\ref{3456})$. The main result of this section asserts that ssMp's can be obtained as 
time changed Markov additive processes (MAP) which we now define.\\ 

A MAP $\{(\theta,\xi),P_{y,z}\}$ is an $S\times\mathbb{R}$-valued Markov process absorbed at some extra state $\delta$, 
such that for any $y\in S$, $z\in\mathbb{R}$, $s,t\ge0$, and for any positive measurable function $f$, defined on $S\times\mathbb{R}$, 
\begin{equation}\label{3473}
E_{y,z}(f(\theta_{t+s},\xi_{t+s}-\xi_t),t+s<\zeta_p\,|\,\mathcal{F}_t)=E_{\theta_t,0}(f(\theta_{s},\xi_{s}),s<\zeta_p)\ind_{\{t<\zeta_p\}}\,,
\end{equation}
where we set $\zeta_p:=\zeta(\theta,\xi)$ for the lifetime of $\{(\theta,\xi),P_{y,z}\}$, in order to avoid heavy notation. Let us now 
stress the following important remarks.\\

\begin{rem} Let $\{(\theta,\xi),P_{y,z}\}$ be any MAP and set $\theta_t=\delta'$, $t\ge\zeta_p$, for some extra state $\delta'$. 
Then according to the definition of MAP's, 
for any fixed $z\in\mathbb{R}$, the process $\{\theta,P_{y,z}\}$ is an $S$-valued Markov process absorbed at $\delta'$, such 
that $P_{y,z}(\theta_0=y)=1$, for all $y\in S$ and whose transition semigroup does not depend on $z$.
\end{rem}
\begin{rem}\label{1964} If for any $z\in\mathbb{R}$, the law of the process $(\xi_t,0\le t<\zeta_p)$ under $P_{y,z}$ does not depend 
on $y\in S$, then $(\ref{3473})$ entails that the latter is a 
possibly killed L\'evy process such that $P_{y,z}(\xi_0=z)=1$, for all $z\in\mathbb{R}$. In particular, $\zeta_p$ is exponentially 
distributed and its mean does not depend on $y,z$. Moreover, when $\zeta_p$ is finite, the process $(\xi_t,0\le t<\zeta_p)$ admits 
almost surely a left limit at its lifetime.  Examples of such MAP's can be constructed by coupling any $S$-valued Markov 
process $\theta$ with any independent real valued L\'evy process $\xi$ and by killing the couple $(\theta,\xi)$ at an independent 
exponential time. Isotropic MAP's also satisfy this property. These cases are described in Section $\ref{positive}$, see parts 
$2.$~and $3.$~of Proposition $\ref{3419}$, Definition $\ref{4362}$ and the remark which follows.
\end{rem}

MAP's taking values in general state spaces were introduced  in \cite{es} and \cite{ci}. We also refer to Chapter XI 2.a in 
\cite{as} for an account on MAP's in the case where $\theta$ is valued in a finite set. In this particular setting, they are also accurately
described in the articles \cite{kkpw} and \cite{ddk}, see Sections A.1 and A.2 in \cite{ddk}, which inspired the following extension of 
Lamperti representation. 

\begin{theorem}\label{2461}
Let  $\alpha\ge0$ and $\{(\theta,\xi),P_{y,z}\}$ be a MAP in $S\times\mathbb{R}$, with lifetime $\zeta_p$ and absorbing state $\delta$. 
Define the process $X$ by 
\[X_t=\left\{\begin{array}{lll}
\theta_{\tau_t}e^{\xi_{\tau_t}}\,,&\mbox{if}&\mbox{$t<\int_0^{\zeta_p}\exp(\alpha\xi_s)\,ds$}\,,\\
0\,,&\mbox{if}&\mbox{$t\ge\int_0^{\zeta_p}\exp(\alpha\xi_s)\,ds$}\,,
\end{array}\right.\]
where $\tau_t$ is the time change $\tau_t=\inf\{s:\int_0^se^{\alpha\xi_u}\,du>t\}$, for $t<\int_0^{\zeta_p} e^{\alpha\xi_s}\,ds$.
Define the probability measures $\p_x=P_{x/\|x\|,\log\|x\|}$, for $x\in H$ and $\p_0=P_\delta$. Then the
process $\{X,\p_x\}$ is an $H$-valued ssMp, with index $\alpha$ and lifetime $\int_0^{\zeta_p}\exp(\alpha\xi_s)\,ds$.

Conversely, let $\{X,\p_x\}$ be an $H$-valued ssMp, with index $\alpha\ge0$ and denote by $\zeta_c$ its lifetime. Define the 
process $(\theta,\xi)$ by 
\[\left\{\begin{array}{lll}
\xi_t=\log\|X\|_{A_t}&\mbox{and}\,\;\;\theta_t=\frac{X_{A_t}}{\|X\|_{A_t}}\,,\;\;\mbox{if}&t<\int_0^{\zeta_c}\frac{ds}{\|X_s\|^\alpha}\,,\\
(\xi_t,\theta_t)=\delta\,,\;\;\mbox{if}&t\ge \int_0^{\zeta_c}\frac{ds}{\|X_s\|^\alpha}\,,
\end{array}\right.\]
where $\delta$ is some extra state, and $A_t$ is the time change $A_t=\inf\{s:\int_0^s\frac{du}{\|X_u\|^\alpha}>t\}$, 
for $t<\int_0^{\zeta_c}\frac{ds}{\|X_s\|^\alpha}$. 
Define the probability measures, $P_{y,z}:=\p_{ye^z}$, for $y\in S$, $z\in\mathbb{R}$ and 
$P_\delta=\p_0$. Then the process $\{(\theta,\xi),P_{y,z}\}$ is a MAP  in $S\times\mathbb{R}$, with lifetime 
$\int_0^{\zeta_c}\frac{ds}{\|X_s\|^\alpha}$.
\end{theorem}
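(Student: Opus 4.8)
The plan is to prove the two assertions by exploiting the fact that the time changes $\tau$ and $A$ appearing in the two directions are mutually inverse, so that the stated correspondence between MAP's and ssMp's is a genuine bijection; it therefore suffices to carry out each construction carefully and to check that the two defining structures---the multiplicative scaling property (\ref{8452}) on the ssMp side and the Markov additive property (\ref{3473}) on the MAP side---are exchanged under the homeomorphism $\phi(y,z)=ye^z$ and its inverse. The conceptual point throughout is that the logarithmic radial coordinate $\xi=\log\|X\|$ turns the multiplicative action $x\mapsto ax$ into the additive shift $z\mapsto z+\log a$, while the clock $\int_0^{\cdot}\|X_s\|^{-\alpha}\,ds$ (resp.\ $\int_0^{\cdot}e^{\alpha\xi_s}\,ds$) is precisely the one that renders the increments of $\xi$ time-homogeneous.

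For the direct implication, I would first observe that $X_t=\phi(\theta_{\tau_t},\xi_{\tau_t})\in H$ by (\ref{3456}), with the convention that $X_t=0$ exactly on $\{t\ge\int_0^{\zeta_p}e^{\alpha\xi_s}\,ds\}$, so that the stated lifetime is correct. The scaling property (\ref{8452}) follows from the behaviour of the construction under a shift of the starting point: since $\p_{a^{-1}x}=P_{x/\|x\|,\log\|x\|-\log a}$, the MAP is started with $\xi_0=\log\|x\|-\log a$, so $\xi$ is shifted by $-\log a$, whence $e^{\xi}$ is multiplied by $a^{-1}$ and the additive functional $s\mapsto\int_0^se^{\alpha\xi_u}\,du$ is multiplied by $a^{-\alpha}$; substituting this into the definitions of $\tau$ and $X$ yields that $(aX_{a^{-\alpha}t})$ under $\p_{a^{-1}x}$ has the law of $X$ under $\p_x$. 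The (strong) Markov property of $X$ is then obtained from that of $(\theta,\xi)$ by the classical time-change argument: $\tau$ is the right-continuous inverse of the continuous, strictly increasing additive functional $t\mapsto\int_0^te^{\alpha\xi_s}\,ds$, and (\ref{3473}) guarantees that the shifted post-$\tau_t$ process has the prescribed conditional law. Finally I would transfer the remaining standard-process requirements---right continuity and existence of left limits, quasi-left continuity on $[0,\zeta)$, and the existence of a reference measure---through the homeomorphism $\phi$ and the time change.

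For the converse, the symmetric construction sets $\xi_t=\log\|X\|_{A_t}$ and $\theta_t=X_{A_t}/\|X\|_{A_t}$, with $A$ the inverse of $t\mapsto\int_0^{t}\|X_u\|^{-\alpha}\,du$; by (\ref{3456}) these take values in $\mathbb{R}$ and $S$ respectively. The crux is to verify the defining identity (\ref{3473}), namely that conditionally on $\mathcal{F}_t$ the pair $(\theta_{t+s},\xi_{t+s}-\xi_t)$ has a law depending on the past only through $\theta_t$. This is where the scaling property (\ref{8452}) is used decisively: applying it with $a=\|X\|_{A_t}$ replaces the process started from $X_{A_t}$ by the same law started from the unit-norm point $\theta_t$, which simultaneously removes the current radial level from the future increments of $\xi$ and shows that the future law of $\theta$ does not depend on $\xi_t$. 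Combined with the strong Markov property of $X$ at the stopping times $A_t$ and the additivity of the clock, this yields (\ref{3473}) and identifies the lifetime $\zeta_p=\int_0^{\zeta_c}\|X_s\|^{-\alpha}\,ds$.

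The main obstacle I anticipate is not the algebra of the time changes, which is routine, but the careful bookkeeping needed to make the two halves genuinely inverse and to preserve the strong Markov property together with the path-regularity and reference-measure hypotheses. Specifically, one must check that $\tau$ and $A$ are honestly reciprocal time changes, so that the two constructions compose to the identity; that these clocks are finite and strictly increasing on the relevant random intervals, so that no mass is lost at the lifetime; and that quasi-left continuity and the reference measure survive both the change of variables $\phi$ and the random time substitution. Verifying (\ref{3473}) in full rigour---extracting the translation invariance in the $\xi$-variable directly from (\ref{8452})---is the step that requires the most care.
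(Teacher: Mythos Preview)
Your proposal is correct and follows essentially the same route as the paper: in the direct part you use the homeomorphism $\phi$ together with the standard time-change machinery to produce a Markov process, and then derive the scaling property from the additive shift invariance built into the MAP definition; in the converse part you apply the strong Markov property at the stopping times $A_t$ and then self-similarity with $a=\|X\|_{A_t}$ to reduce to a start from the unit sphere, which is exactly the computation the paper carries out. The only minor imprecision is that you attribute the strong Markov property of $X$ to (\ref{3473}), whereas it really comes from the general fact that a time change by the inverse of a strictly increasing continuous additive functional preserves the standard-process structure (the paper cites Exercise~V.(2.11) of Blumenthal--Getoor); the additive identity (\ref{3473}) is what delivers the \emph{scaling} property, as you do use correctly elsewhere.
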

\begin{proof} Let $(\mathcal{G}_t)_{t\ge0}$ be the filtration corresponding to the probability space on which the MAP 
$\{(\theta,\xi),P_{y,z}\}$ is defined. Then the process $\{Y,\p_x\}$, where $\p_x$ as in the statement and 
$Y_t=\theta_{t}e^{\xi_{t}}$, for $t<\zeta_p$ and $Y_t=0$, for $t\ge\zeta_p$ is the image of $(\theta,\xi)$ through an obvious one to 
one measurable mapping, say $\phi_\delta:(S_{d-1}\times\mathbb{R})\cup\{\delta\}\rightarrow\mathbb{R}^d$. Hence it is clearly a 
standard process, as defined in the beginning of this section, in the filtration $(\mathcal{G}_t)_{t\ge0}$. Besides, if $\nu$ is the 
reference measure of $\{(\theta,\xi),P_{y,z}\}$, then $\nu\circ\phi_\delta^{-1}$ is a reference measure for $\{Y,\p_x\}$. Now define 
$\tau_t$ as in the statement if $t<\int_0^{\zeta_p} e^{\alpha\xi_s}\,ds$, and set $\tau_t=\infty$ and $X_{\tau_t}=0$, if 
$t\ge\int_0^{\zeta_p} e^{\alpha\xi_s}\,ds$. Since $e^{\xi_s}=\|Y_s\|$, $(\tau_t)_{t\ge0}$ is the right continuous inverse of 
the continuous, additive functional $t\mapsto\int_0^{t\wedge\zeta_p}\|Y_s\|^\alpha\,ds$ of $\{Y,\p_x\}$, which is strictly 
increasing on $(0,\zeta_p)$. It follows from part v of Exercise (2.11), in Chapter V of 
\cite{bg}, that $\{X,\p_x\}$ is a standard process in the filtration $(\mathcal{G}_{\tau_t})_{t\ge0}$. Finally, note that 
$\zeta_c:=\int_0^{\zeta_p} e^{\alpha\xi_s}\,ds$ is the lifetime of $X$. Then we derive from the identity 
$\e_x(\int_0^{\zeta_c} \ind_{\{X_t\in dy\}}\,dt)=\|y\|\e_x(\int_0^{\zeta_p}\ind_{\{Y_t\in dy\}}\,dt)$ and the fact that  for all
$x\in H$, $\|Y_t\|>0$, $\p_x$-a.s.,  on the set $t\in[0,{\zeta_p})>0$, that $\nu\circ\phi_\delta^{-1}$ is also 
reference measure for $\{X,\p_x\}$.

Now we check the scaling property as follows. Let $a>0$, then for $t<a^\alpha\int_0^{\zeta_p} e^{\alpha\xi_s}\,ds$,
\[\tau_{a^{-\alpha}t}=\inf\{s:\int_0^se^{\alpha(\ln a+\xi_v)}\,dv>t\}\,.\]
Let us set $\xi^{(a)}_t=\ln a+\xi_t$, then with obvious notation, $\tau_{a^{-\alpha}t}=\tau^{(a)}_t$ and
\[X_{a^{-\alpha}t}=a^{-1}\theta_{\tau_t^{(a)}}\exp(\xi^{(a)}_{\tau_t^{(a)}})\,.\]
But the equality $\{(\theta,\xi^{(a)}),P_{y,-\ln a+z}\}=\{(\theta,\xi),P_{y,z}\}$ follows from the definition (\ref{3473}) of MAP's, so that 
with $x=ye^{z}$, $a^{-1}x=ye^{-\ln a+z}$, $\p_x=P_{y,z}$ and $\p_{a^{-1}x}=P_{y,-\ln a+z}$, we have
\[\{(aX_{a^{-\alpha }t},\,t\ge0),\p_{a^{-1}x}\}=\{(X_t,\,t\ge0),\p_{x}\}\,.\]

Conversely, let $\{X,\p_x\}$ be a ssMp with index $\alpha$. Then we prove that the process $\{(\theta,\xi),P_{y,z}\}$ of the statement 
is a standard process which admits a reference measure through the same arguments as in the direct part of the proof. We only 
have to check that this process is a MAP. Let $(\mathcal{F}_t)_{t\ge0}$ be the filtration of the probability space on which $\{X,\p_x\}$ 
is defined. Define $A_t$ as in the statement if $t<\int_0^{\zeta_c}\frac{ds}{\|X_s\|^\alpha}$, set $A_t=\infty$, if 
$t\ge\int_0^{\zeta_c}\frac{ds}{\|X_s\|^\alpha}$ and note that for each $t$, $A_t$ is a stopping time of $(\mathcal{F}_t)_{t\ge0}$.Then let 
us prove that $\{(\theta,\xi),P_{y,z}\}$ is a MAP in the filtration $\mathcal{G}_t:=\mathcal{F}_{A_t}$. We denote the usual shift operator 
by $S_t$ and note that for all $s,t\ge0$, 
\[A_{t+s}=A_t+S_{A_t}(A_s)\,.\]
Set $\zeta_p=\int_0^{\zeta_c}\frac{ds}{\|X_s\|^\alpha}$.
Then from the strong Markov property of $\{X,\p_x\}$ applied at the stopping time $A_t$, we obtain from the definition of 
$\{(\theta,\xi),P_{y,z}\}$ in the statement, that for any positive, Borel function $f$, 
\begin{eqnarray*}
&&E_{\frac{x}{\|x\|},\log\|x\|}(f(\theta_{t+s},\xi_{t+s}-\xi_t),t+s<\zeta_p\,|\,\mathcal{G}_t)\\
&=&\e_{x}\left(f\left(S_{A_t}\left(\frac{X_{A_{s}}}{\|X\|_{A_{s}}}\right),\log\frac{S_{A_t}(\|X\|_{A_{s}})}{\|X\|_{A_{t}}}\right),
A_t+S_{A_t}(A_s)<\zeta_c\,|\,\mathcal{G}_t\right)\\
&=&\e_{X_{A_t}}\left(f\left(\frac{X_{A_{s}}}{\|X\|_{A_{s}}},\log\frac{\|X\|_{A_{s}}}{z}\right),
A_s<\zeta_c\right)_{z=\|X\|_{A_{t}}}\ind_{\{A_t<\zeta_c\}}\\
&=&\e_{\frac{X_{A_t}}{\|X\|_{A_{t}}}}\left(f\left(\frac{X_{A_{s}}}{\|X\|_{A_{s}}},\log\|X\|_{A_{s}}\right),
A_s<\zeta_c\right)\ind_{\{A_t<\zeta_c\}}\\
&=&E_{\theta_t,0}(f(\theta_{t+s},\xi_{t+s}-\xi_t),t+s<\zeta_p)\ind_{\{t<\zeta_p\}}\,,
\end{eqnarray*}
where the third equality follows from the self-similarity property of $\{X,\p_x\}$. We have obtained (\ref{3473}) and the theorem 
is proved.
\end{proof}
\noindent This theorem provides a one-to-one correspondence between ssMp's with index $\alpha\ge0$ and MAP's, in the general 
setting of standard processes which have a reference measure. We emphasize that $\{X,\p_x\}$ and $\{(\theta,\xi),P_{y,z}\}$ can 
have very broad behaviours at their lifetimes. For instance, $\{X,\p_x\}$ can have a finite lifetime $\zeta_c$, but may or may not 
have a left limit at $\zeta_c$. Besides whether or not $\zeta_c$ is finite, either $\int_0^{\zeta_c}\frac{ds}{\|X_s\|^\alpha}=\infty$ 
and $\{(\theta,\xi),P_{y,z}\}$ has infinite lifetime or $\int_0^{\zeta_c}\frac{ds}{\|X_s\|^\alpha}<\infty$ and $\{(\theta,\xi),P_{y,z}\}$ 
has finite lifetime $\zeta_p=\int_0^{\zeta_c}\frac{ds}{\|X_s\|^\alpha}$ and may or may not have a left limit at $\zeta_p$. 
However, in all commonly studied cases, the processes $\{X,\p_x\}$ and $\{(\theta,\xi),P_{y,z}\}$ admit almost surely a left limit 
at their lifetime.\\

For instance, if $d=1$ and $S=\{1\}$, then it follows from (\ref{3473}) that $\xi$ is a possibly killed real 
L\'evy process. Hence our result implies Theorem 4.1 of Lamperti \cite{la}, who proved that all positive self-similar Markov processes 
can be obtained as exponentials of time changed L\'evy process. In this case, in order to describe the behaviour of $\{X,\p_x\}$ at its
lifetime, it suffices to note from general properties of L\'evy processes that $\int_0^{\zeta_p}\exp(\alpha\xi_s)\,ds=\infty$ if and only if
$\xi$ is an unkilled L\'evy process such that $\limsup\xi_t=\infty$, almost surely.\\

More generally, whenever $S$ is a finite set,  for all $z$, $\{\theta,P_{y,z}\}$ is a possibly absorbed continuous time Markov chain. 
As we have already observed, the law of this Markov chain does not depend on $z$. Then it is plain from the definition that between 
two successive jump times of $\theta$, the process $\xi$ behaves like a L\'evy process. Therefore, if $n=\mbox{card}(S)$, then the 
law of $\{(\theta,\xi),P_{y,z}\}$ is characterized by the intensity matrix $Q=(q_{ij})_{i,j\in S}$ of $\theta$, $n$ non killed L\'evy processes 
$\xi^{(1)},\dots,\xi^{(n)}$, and the real valued random variables $\Delta_{ij}$, 
such that $\Delta_{ii}=0$ and where, for $i\neq j$, $\Delta_{ij}$ represents the size of the jump of $\xi$ when $\theta$ jumps from $i$ 
to $j$. More specifically, the law of $\{(\theta,\xi),P_{y,z}\}$ is given by
\begin{equation}\label{7293}
E_{i,0}(e^{u\xi_t},\theta_t=j)=(e^{A(u)t})_{i,j}\,,\;\;\;i,j\in S\,,\;\;\;u\in i\mathbb{R}\,,
\end{equation}
where $A(u)$ is the matrix,
\[A(u)=\mbox{diag}(\psi_1(u),\dots,\psi_{n}(u))+(q_{ij}G_{i,j}(u))_{i,j\in S}\,,\]
$\psi_1,\dots,\psi_{n}$ are the characteristic exponents of the L\'evy processes $\xi^{(i)}$, $i=1,\dots,n$, that is 
$E(e^{u\xi^{(i)}_1})=e^{\psi_i(u)}$, and $G_{i,j}(u)=E(\exp(u\Delta_{i,j})$. 
We refer to Sections A.1 and A.2 of \cite{ddk} for more details. We emphasize that when $d=1$, any
$\mathbb{R}\setminus\{0\}$-valued ssMp absorbed at 0 is represented by such a MAP. The case where the intensity matrix 
$Q$ is irreducible has been intensively studied in \cite{cpr}, \cite{kkpw} and \cite{ddk}.\\

We end this section with an application of Theorem \ref{2461} to a construction of ssMp's which are not killed when they hit 
0. Let $\{X,\p_x\}$ be an $\mathbb{R}^d$-valued Markov process satisfying the scaling property (\ref{8452}) with $\alpha>0$, and 
assume that it has an infinite lifetime.This means in particular that $\{X,\p_x\}$ can possibly hit 0 without being absorbed, like real 
Brownian motion for instance. Then consider the trivial real valued ssMp $\{Y,{\bf P}_y\}$, whose law is defined by 
${\bf E}_y(f(Y_{t}))=f(\mbox{sgn}(y)(|y|^{\alpha}+t)^{1/\alpha})$, for $y\in\mathbb{R}\setminus\{0\}$. The process 
$\{(X,Y),\p_x\otimes{\bf P}_y\}$ is clearly an $\mathbb{R}^{d+1}$-valued ssMp which never hits 0. 
Hence, from Theorem \ref{2461}, it 
admits a representation from a MAP $\{(\theta,\xi),P_{y,z}\}$  in $S_d\times\mathbb{R}$, such that
$\int_0^{\zeta_p}e^{\alpha\xi_s}\,ds=\infty$, $P_{y,z}$-a.s. for all $y,z$. Therefore, the process $\{X,\p_x\}$ can be represented as 
a functional of this MAP for all $t\in[0,\infty)$ and since $Y$ is deterministic, this MAP is itself a functional of $\{X,\p_x\}$.

\begin{corollary} Let $\{X,\p_x\}$ be an $\mathbb{R}^d$-valued Markov process satisfying the scaling property $(\ref{8452})$ with 
$\alpha>0$ and assume that its lifetime is infinite $\p_x$-a.s. for all $x\in\mathbb{R}^d$. Then the process
\[\left\{\begin{array}{l}
\xi_t=\log(\|X\|_{A_t}^2+A_t^{2/\alpha})^{1/2}\;\;\mbox{and}\,\;\;\theta_t=
\frac{(X_{A_t},A_t^{1/\alpha})}{(\|X\|_{A_t}^2+A_t^{2/\alpha})^{1/2}}\,,
\;\;\mbox{if}\;\;\;t<\int_0^{\infty}\frac{ds}{(\|X\|_s^2+s^{2/\alpha})^{1/2}}\,,\\
(\xi_t,\theta_t)=\delta\,,\;\;\mbox{if}\;\;\;t\ge\int_0^{\infty}\frac{ds}{(\|X\|_s^2+s^{2/\alpha})^{1/2}}\,,
\end{array}\right.\]
where $\delta$ is an extra state and $A_t=\inf\{s:\int_0^s\frac{du}{(\|X\|_u^2+u^{2/\alpha})^{\alpha/2}}>t\}$, is a MAP in $S_d\times\mathbb{R}$, with lifetime $\zeta_p$, such that $\int_0^{\zeta_p}e^{\alpha\xi_s}\,ds=\infty$, $P_{y,z}$-a.s. for all $y,z$.

Besides, the process $\{X,\p_x\}$ can be represented as follows:
\[X_t=\bar{\theta}_{\tau_t}e^{\xi_{\tau_t}}\,,\;\;\;t\ge0\,,\]
where $\tau_t=\inf\{s:\int_0^se^{\alpha\xi_u}\,du>t\}$, $\bar{\theta}=(\theta^{(1)},\dots,\theta^{(d)})$ and $\theta^{(i)}$ 
is the $i$-th coordinate of $\theta$. 
\end{corollary}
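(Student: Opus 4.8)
The plan is to apply both directions of Theorem~\ref{2461} to the augmented process $\{(X,Y),\p_x\otimes{\bf P}_y\}$ introduced just above the statement, and then to translate the resulting Lamperti formulas back into the coordinates of $X$ alone. Throughout I start the companion clock at $0$, which gives the deterministic path $Y_t=t^{1/\alpha}$.

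First I would record that $\{(X,Y),\p_x\otimes{\bf P}_y\}$ is an $\mathbb{R}^{d+1}\setminus\{0\}$-valued ssMp with index $\alpha$ and infinite lifetime. Since $Y_t=\mbox{sgn}(y)(|y|^\alpha+t)^{1/\alpha}$ is deterministic, a direct substitution in $(\ref{8452})$ shows that $Y$ scales with index $\alpha$, so that the independent product $(X,Y)$ does too; moreover $Y_t>0$ for $t>0$ while $X_0=x\neq0$, so $(X,Y)$ never reaches $0\in\mathbb{R}^{d+1}$ and, both $X$ and $Y$ having infinite lifetimes, its lifetime $\zeta_c$ is infinite. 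The unit sphere of $\mathbb{R}^{d+1}$ being $S_d$, the MAP produced by Theorem~\ref{2461} is $S_d\times\mathbb{R}$-valued, as announced.

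Next I would instantiate the converse part of Theorem~\ref{2461} for $(X,Y)$. Here $\|(X,Y)\|_s=(\|X\|_s^2+s^{2/\alpha})^{1/2}$, so the Lamperti clock $A_t=\inf\{s:\int_0^s\|(X,Y)\|_u^{-\alpha}\,du>t\}$ is exactly the one in the statement, and $\xi_t=\log\|(X,Y)\|_{A_t}$, $\theta_t=(X_{A_t},Y_{A_t})/\|(X,Y)\|_{A_t}$ coincide with the displayed expressions. Theorem~\ref{2461} then gives at once that $\{(\theta,\xi),P_{y,z}\}$ is a MAP with lifetime $\zeta_p=\int_0^{\zeta_c}\|(X,Y)\|_s^{-\alpha}\,ds$. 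For the last assertion I would use the change of variables $u=A_s$, $ds=\|(X,Y)\|_u^{-\alpha}\,du$, which, since $e^{\alpha\xi_s}=\|(X,Y)\|_{A_s}^\alpha$, yields $\int_0^{\zeta_p}e^{\alpha\xi_s}\,ds=\int_0^{\zeta_c}du=\zeta_c=\infty$, $P_{y,z}$-a.s.

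Finally, feeding this MAP back through the direct part of Theorem~\ref{2461} reconstructs $(X,Y)$; because the associated lifetime $\int_0^{\zeta_p}e^{\alpha\xi_s}\,ds$ is infinite, the reconstruction $(X_t,Y_t)=\theta_{\tau_t}e^{\xi_{\tau_t}}$, with $\tau_t=\inf\{s:\int_0^se^{\alpha\xi_u}\,du>t\}$, is valid for all $t\ge0$, and projecting onto the first $d$ coordinates yields $X_t=\bar\theta_{\tau_t}e^{\xi_{\tau_t}}$. I expect the only real obstacle to be the verification that $(X,Y)$ meets the standing hypotheses of Section~\ref{sec1}, namely that it is a standard process carrying a reference measure. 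Because the clock $Y$ is deterministic, the potential of $(X,Y)$ started from $(x,y)$ is supported on a set of the form $\{(\bar z,w):|w|\ge|y|\}$, so one cannot take a naive product of reference measures; instead the reference measure must be transported from that of $X$ through the substitution $w=t^{1/\alpha}$, and the right-continuity, quasi-left-continuity and strong Markov property of the product must be checked. Once this point is settled, both conclusions follow mechanically from Theorem~\ref{2461}.
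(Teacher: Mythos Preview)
Your approach is exactly the one the paper takes: the paragraph immediately preceding the corollary already introduces the deterministic companion $Y_t=\mbox{sgn}(y)(|y|^\alpha+t)^{1/\alpha}$, observes that $(X,Y)$ is an $\mathbb{R}^{d+1}\setminus\{0\}$-valued ssMp with infinite lifetime, and invokes Theorem~\ref{2461} in both directions; the corollary is then stated without further proof. Your write-up simply fleshes out those steps, including the change-of-variables verification that $\int_0^{\zeta_p}e^{\alpha\xi_s}\,ds=\zeta_c=\infty$, and correctly flags the only nontrivial point left implicit in the paper, namely that the augmented process $(X,Y)$ inherits the standing hypotheses (standard process with a reference measure) required to apply Theorem~\ref{2461}.
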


\section{Inversion and duality of ssMp's}\label{positive}

Recall that two $E$-valued Markov processes absorbed at $\delta$ with respective semigroups $(P_t)_{t\ge0}$ and $(\widehat{P}_t)_{t\ge0}$ are in weak duality with respect to some $\sigma$-finite measure $m(dx)$ if for all positive measurable 
functions $f$ and $g$, 
\begin{equation}\label{7434}
\int_{E} g(x)P_tf(x)\,m(dx)=\int_{E} f(x)\widehat{P}_tg(x)\,m(dx)\,.
\end{equation}
Duality holds when moreover, $P_t$ and $\widehat{P}_t$ are absolutely continuous with respect to $m(dx)$. However, we will make 
an abuse of language by simply saying that they are in duality, whenever they are in weak duality. With the convention that 
all measurable functions on $E$ vanish at the isolated point $\delta$, duality is sometimes defined by   
$\int_{E\cup\{\delta\}} g(x)P_tf(x)\,m(dx)=\int_{E\cup\{\delta\}} f(x)\widehat{P}_tg(x)\,m(dx)$, which is equivalent to (\ref{7434}). We 
refer to Chapter 13 in \cite{cw} where duality of standard processes is fully described. \\ 

For a MAP $\{(\theta,\xi),P_{y,z}\}$, we denote by $\{(\theta,-\xi),P_{y,-z}\}$ the process with lifetime $\zeta_p$, obtained from 
$\{(\theta,\xi),P_{y,z}\}$ simply by replacing $\xi$ by its opposite. Then $\{(\theta,-\xi),P_{y,-z}\}$ is clearly a standard process,
with an obvious reference measure, which satisfies (\ref{3473}). Hence, $\{(\theta,-\xi),P_{y,-z}\}$ is a MAP.   
In this section, we will focus on MAP's $\{(\theta,\xi),P_{y,z}\}$ such that $\{(\theta,\xi),P_{y,z}\}$  and $\{(\theta,-\xi),P_{y,-z}\}$  are 
in weak duality with respect to the measure $\pi(ds)dx$ on $S\times\mathbb{R}$, where $\pi(ds)$ is some $\sigma$-finite
measure on $S$ and $dx$ is the Lebesgue measure on $\mathbb{R}$. We will need on the following characterisation of this 
duality.

\begin{lemma}\label{9362} The MAP's $\{(\theta,\xi),P_{y,z}\}$ and $\{(\theta,-\xi),P_{y,-z}\}$ are in duality with respect to the measure
$\pi(ds)dx$ if and only if the following identity between measures
\begin{equation}\label{1363}
P_{y,0}(\theta_t\in dy_1,\xi_t\in dz)\,\pi(dy)=P_{y_1,0}(\theta_t\in dy,\xi_t\in dz)\,\pi(dy_1)\,,
\end{equation}
holds on $S\times\mathbb{R}\times S$.

We call $(\ref{1363})$ the reversibility property of the MAP $\{(\theta,\xi),P_{y,z}\}$, or equivalently we will say that 
$\{(\theta,\xi),P_{y,z}\}$ is reversible $($with respect to the measure $\pi(ds)dx$$)$.
\end{lemma}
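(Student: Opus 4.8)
The plan is to turn the weak duality identity (\ref{7434}), taken with $E=S\times\mathbb{R}$ and $m(dy,dz)=\pi(dy)\,dz$, into a pointwise identity between the transition kernels and to recognise the latter as (\ref{1363}). First I would record the spatial homogeneity of a MAP in the $\xi$-variable: setting $t=0$ in (\ref{3473}) (so that $\theta_0=y$, $\xi_0=z$ under $P_{y,z}$) and renaming $s$ by $t$ gives, for every positive measurable $f$,
\[
E_{y,z}\big(f(\theta_{t},\xi_{t}-z),\,t<\zeta_p\big)=E_{y,0}\big(f(\theta_{t},\xi_{t}),\,t<\zeta_p\big).
\]
Introducing the increment (sub-probability) kernel $q_t(y,dy_1,du):=P_{y,0}(\theta_t\in dy_1,\xi_t\in du,\,t<\zeta_p)$, this reads $P_t\big((y,z),(dy_1,dw)\big)=q_t\big(y,dy_1,d(w-z)\big)$ for the semigroup of $\{(\theta,\xi),P_{y,z}\}$. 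For the dual process $\{(\theta,-\xi),P_{y,-z}\}$ the state $(y,z)$ amounts to starting $(\theta,\xi)$ from $(y,-z)$ and the event $\{-\xi_t\in dw\}$ to $\{\xi_t\in d(-w)\}$; the same homogeneity, now with initial height $-z$, yields $\widehat{P}_t\big((y,z),(dy_1,dw)\big)=q_t\big(y,dy_1,d(z-w)\big)$. Throughout I use the standing convention that $f,g$ vanish at $\delta$, so that only the sub-Markovian kernels on $S\times\mathbb{R}$ enter.

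Next I would substitute these two expressions into the two sides of (\ref{7434}). Writing each integral out and performing, on the left the change of variable $w=z+u$ and on the right $w=z-u$ in the $\xi$-coordinate, followed on the right by the harmless translation $z\mapsto z+u$ and a relabelling of the two points of $S$, both sides take the form $\int_{\mathbb{R}}dz\int_{S\times S\times\mathbb{R}}g(y,z)\,f(y_1,z+u)\,[\,\cdot\,]$, with $[\,\cdot\,]=q_t(y,dy_1,du)\,\pi(dy)$ on the left and $[\,\cdot\,]=q_t(y_1,dy,du)\,\pi(dy_1)$ on the right. The key point is that after these translations the $z$-integration decouples from the kernel, so weak duality at time $t$ is equivalent to
\[
\int_{\mathbb{R}}dz\int_{S\times S\times\mathbb{R}}g(y,z)\,f(y_1,z+u)\,\big[q_t(y,dy_1,du)\,\pi(dy)-q_t(y_1,dy,du)\,\pi(dy_1)\big]=0
\]
for all positive measurable $f,g$.

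Finally I would argue that this holds for all $f,g$ exactly when the signed measure $q_t(y,dy_1,du)\,\pi(dy)-q_t(y_1,dy,du)\,\pi(dy_1)$ vanishes on $S\times S\times\mathbb{R}$, which is precisely the reversibility identity (\ref{1363}). The implication from (\ref{1363}) is immediate by running the computation backwards. For the converse I would test against product functions $g(y,z)=g_1(y)h_1(z)$ and $f(y_1,w)=f_1(y_1)h_2(w)$, so that the $z$-integral factors out the cross-correlation $u\mapsto\int_{\mathbb{R}}h_1(z)h_2(z+u)\,dz$; choosing $h_2$ to approximate the characters $w\mapsto e^{i\lambda w}$ realises every exponential $u\mapsto e^{i\lambda u}$ up to a nonzero constant, and Fourier uniqueness in $u$ together with the arbitrariness of $g_1,f_1$ forces the difference measure to be zero. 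The only genuine subtlety here, and the step I expect to be the main obstacle, is this passage from the integrated identity back to the pointwise equality of kernels when $\pi$ is merely $\sigma$-finite: since $q_t$ is a finite kernel, I would first localise to angular sets of finite $\pi$-measure, reducing to finite measures where the Fourier argument applies verbatim; the remainder of the proof is just the bookkeeping of the two translations in the $\xi$-variable.
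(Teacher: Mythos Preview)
Your proposal is correct and follows essentially the same route as the paper's proof: both exploit the spatial homogeneity of the MAP to express each semigroup through the increment kernel $P_{y,0}(\theta_t\in dy_1,\xi_t\in du)$ and then perform the same translations in the $\xi$-variable to reduce the weak duality identity to the integrated form of (\ref{1363}). For the converse the paper is content to say that it follows ``from the same computation'', whereas you supply an explicit Fourier/localisation argument; your caution about the $\sigma$-finite case is warranted, though a monotone-class argument on product test functions would serve just as well as the Fourier step.
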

\begin{proof} We can write for all nonnegative Borel functions $f$ and $g$ on $(S\times\mathbb{R})\cup\{\delta\}$
which vanish at $\delta$, 
\begin{eqnarray*}
&&\int_{S\times\mathbb{R}}f(y,z)E_{y,z}(g(\theta_t,\xi_t))\,\pi(dy)dz\\
&&\qquad\qquad=\int_{S}E_{y,0}\left(\int_{\mathbb{R}}f(y,z)g(\theta_t,\xi_t+z)\,dz,t<\zeta_p\right)\,\pi(dy)\\
&&\qquad\qquad=\int_{S}\int_{\mathbb{R}}E_{y,0}\left(f(y,z_1-\xi_t)g(\theta_t,z_1),t<\zeta_p\right)\,\pi(dy)\,dz_1\\
&&\quad=\int_{S^2}\int_{\mathbb{R}^2}f(y,z_1-u)g(y_1,z_1)P_{y,0}(\theta_t\in dy_1,\xi_t\in du)\,\pi(dy)\,dz_1\\
&&\quad=\int_{S^2}\int_{\mathbb{R}^2}f(y,z_1-u)g(y_1,z_1)P_{y_1,0}(\theta_t\in dy,\xi_t\in du)\,\pi(dy_1)\,dz_1\\
&&\quad=\int_{S\times\mathbb{R}}g(y_1,z_1)E_{y_1,-z_1}(f(\theta_t,-\xi_t))\,\pi(dy_1)dz_1\,,
\end{eqnarray*}
where the first identity follows from the definition of MAP's, the second one is obtained from a change of variables
and the fourth identity is due to (\ref{1363}). Then comparing the first and the last term of the above identities, we obtain
duality between $\{(\theta,\xi),P_{y,z}\}$ and $\{(\theta,-\xi),P_{y,-z}\}$ with respect to the measure $\pi(dy) dz$.
Conversely, we prove that the latter duality implies (\ref{1363}) from the same computation.
\end{proof}
\noindent By integrating (\ref{1363}) over the variable $z$, it appears from Lemma \ref{9362} that if a MAP 
$\{(\theta,\xi),P_{y,z}\}$  is reversible with respect to the measure $\pi(ds)dx$, then the Markov process $\{\theta,P_{y,z}\}$
is in duality with itself with respect to $\pi(ds)$, which is also sometimes  called the reversibility property of $\{\theta,P_{y,z}\}$ 
and justifies our terminology for $\{(\theta,\xi),P_{y,z}\}$ .\\

In the next proposition, we give sufficient conditions for a MAP to be reversible. As will be seen later on, each case 
corresponds to a well known class of ssMp's via the representation of Theorem \ref{2461}.

\begin{proposition}\label{3419} In each of the following three cases, the reversibility condition $(\ref{1363})$ is satisfied.
\begin{itemize}
\item[$1.$] Assume that $S$ is finite. Then the MAP $\{(\theta,\xi),P_{y,z}\}$ is reversible if and only if $\{\theta,P_{y,z}\}$ is in duality 
with itself with respect to some measure $(\pi_i,\,i\in S)$ and the random variables $\Delta_{ij}$ introduced in $(\ref{7293})$ are such 
that $\Delta_{ij}\ed\Delta_{ji}$, for all $i,j\in S$. 
\item[$2.$] The transition probabilities of the MAP $\{(\theta,\xi),P_{y,z}\}$ have the following particular form:
\begin{equation}\label{3492}
\left\{\begin{array}{l}
P_{y,z}(\theta_t\in dy_1,\xi_t\in dz_1)=e^{-\lambda t}P_y^{\xi'}(\theta'_t\in dy_1)P_z^{\theta'}(\xi_t'\in dz_1)\,,\\
P_{y,z}((\theta_t,\xi_t)=\delta)=1-e^{-\lambda t}\,,
\end{array}\right.
\end{equation}
for all $t\ge0$, $(y,z),(y_1,z_1)\in S\times H$, where $\lambda>0$ is some constant, $\{\xi',P_z^{\xi'}\}$ is any non killed real L\'evy 
process and $\{\theta',P_y^{\theta'}\}$ is any Markov process on $S$ with infinite lifetime. Moreover, $\{\theta',P_y^{\theta'}\}$
 is in duality with itself with respect to some $\sigma$-finite measure $\pi(ds)$.  
\item[$3.$] $S=S_{d-1}$ and for any orthogonal transformation $T$ of $S_{d-1}$,$\{(\theta,\xi),P_{y,z}\}=\{(T(\theta),\xi),P_{T^{-1}(y),z}\}$, 
for all $(y,z)\in S_{d-1}\times\mathbb{R}$. In this case, $\pi(ds)$ is the Lebesgue measure on $S_{d-1}$. $($When $d=1$, the Lebesgue
measure on $S_{d-1}$ is to be understood as the discrete symmetric measure on $\{-1,+1\}$.$)$ 
\end{itemize}
\end{proposition}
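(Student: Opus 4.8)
The plan is to establish the reversibility identity (\ref{1363}) in each of the three cases by means of the characterisation in Lemma \ref{9362}. It is convenient first to recast (\ref{1363}) in a weak form. For a bounded Borel function $\ell$ on $\mathbb{R}$, introduce the angular kernel
\[
Q_t^\ell(y,dy_1)=\int_{\mathbb{R}}\ell(z)\,P_{y,0}(\theta_t\in dy_1,\xi_t\in dz),
\]
so that (\ref{1363}) holds if and only if, for every $t\ge0$ and every $\ell$, the measure $\pi(dy)\,Q_t^\ell(y,dy_1)$ on $S\times S$ is invariant under the swap $(y,y_1)\mapsto(y_1,y)$. The second case is then the most direct: under the factorised structure (\ref{3492}) one has $P_{y,0}(\theta_t\in dy_1,\xi_t\in dz)=e^{-\lambda t}p_t(y,dy_1)\,m_t(dz)$, where $p_t$ is the transition kernel of $\theta'$ and $m_t$ is the law of $\xi'_t$ started from $0$, which is independent of the angular starting point because $\xi'$ is a L\'evy process. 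Hence $Q_t^\ell(y,dy_1)=e^{-\lambda t}\bigl(\int\ell\,dm_t\bigr)p_t(y,dy_1)$, the scalar factor cancels in the swap identity, and reversibility reduces to $\pi(dy)\,p_t(y,dy_1)=\pi(dy_1)\,p_t(y_1,dy)$, which is exactly the assumed self-duality of $\theta'$ with respect to $\pi$. No symmetry of $\xi'$ is required.

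For the first case, with $S$ finite, I would take Laplace transforms in $z$: using (\ref{7293}), the identity (\ref{1363}) becomes $\pi_i(e^{A(u)t})_{ij}=\pi_j(e^{A(u)t})_{ji}$ for all $i,j\in S$, $t\ge0$ and $u\in i\mathbb{R}$. Writing $D=\mathrm{diag}(\pi_i)$, this reads $D\,e^{A(u)t}=e^{A(u)^{\top}t}D$ for all $t\ge0$, which (differentiating at $t=0$, and exponentiating for the converse) is equivalent to $D A(u)=A(u)^{\top}D$, i.e. symmetry of $DA(u)$. Reading off the off-diagonal entries via $A(u)_{ij}=q_{ij}G_{ij}(u)$ and $G_{ij}(0)=1$, this is equivalent to $\pi_i q_{ij}=\pi_j q_{ji}$ together with $G_{ij}(u)=G_{ji}(u)$ whenever $q_{ij}>0$; the former is the self-duality of $\theta$ with respect to $(\pi_i)$ and the latter is $\Delta_{ij}\ed\Delta_{ji}$, while the diagonal entries impose no constraint. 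Assuming the $\pi_i$ are strictly positive (otherwise one restricts to the support), this yields the stated equivalence in both directions.

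The third case is where the real work lies, and the claim splits into verifying (\ref{1363}) and identifying $\pi$ with the uniform measure $\sigma$ on $S_{d-1}$. From the isotropy hypothesis I would first derive the diagonal invariance $P_{Ty,0}(\theta_t\in T\,dy_1,\xi_t\in dz)=P_{y,0}(\theta_t\in dy_1,\xi_t\in dz)$ for every orthogonal $T$, in which the $\xi$-component is untouched; integrating $\ell(z)$, this makes each measure $M_t^\ell(dy,dy_1):=\sigma(dy)\,Q_t^\ell(y,dy_1)$ on $S_{d-1}\times S_{d-1}$ invariant under the diagonal action of $O(d)$. The crux is to upgrade diagonal invariance to swap invariance. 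For $d\ge2$, $O(d)$ acts transitively on each fibre $\{(y,y_1):\langle y,y_1\rangle=c\}$, $c\in[-1,1]$, of the inner-product map; disintegrating $M_t^\ell$ along this map presents it as a superposition of the invariant measures on these orbits, which are unique up to scale. Since the swap commutes with the diagonal action and preserves every fibre, it fixes each orbit measure and hence fixes $M_t^\ell$, which is precisely the swap invariance, i.e. (\ref{1363}), with $\pi=\sigma$. The case $d=1$ is checked directly: for $S_0=\{-1,+1\}$ with $\pi$ symmetric, invariance under $T=-\mathrm{Id}$ gives $P_{+,0}(\theta_t=-,\xi_t\in dz)=P_{-,0}(\theta_t=+,\xi_t\in dz)$, which is exactly (\ref{1363}). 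The main obstacle throughout is this third case, namely passing from $O(d)$-invariance to swap-symmetry at the level of transition \emph{measures} that need not be absolutely continuous; it is the homogeneous-space structure of $S_{d-1}$ — that the orbits of the diagonal action coincide with the level sets of the inner product and carry a unique invariant measure — that makes the disintegration argument go through.
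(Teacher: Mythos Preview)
Your arguments for parts 1 and 2 are correct and coincide with the paper's, only spelled out more explicitly: the paper also passes through the characteristic form $(\ref{7293})$ and the symmetry $\pi_i(e^{A(u)t})_{ij}=\pi_j(e^{A(u)t})_{ji}$ for part~1, and simply declares part~2 immediate from the factorised form $(\ref{3492})$.

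For part~3 your approach is correct but genuinely different from the paper's. The paper gives a direct Haar-measure computation: it writes $\int_{S_{d-1}} g(y)E_{y,0}(e^{i\lambda\xi_t}f(\theta_t))\,dy$ as $\int_{O(d)} g(hy_1)E_{hy_1,0}(e^{i\lambda\xi_t}f(\theta_t))\,H(dh)$ via the identity $\int_{O(d)}\varphi(hy_1)\,H(dh)=\int_{S_{d-1}}\varphi(y)\,dy$, applies isotropy to bring $h$ inside onto $\theta_t$, then makes the change of variables $h''=hh'$ where $h'\in O(d)$ satisfies $y_1=h'\theta_t$, and reverses the steps with $f$ and $g$ interchanged. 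This is a two-line manipulation with the Haar measure and uses nothing beyond right-invariance of $H$ and transitivity on $S_{d-1}$. Your route instead identifies the orbits of the diagonal $O(d)$-action on $S_{d-1}\times S_{d-1}$ as the level sets of the inner product, disintegrates the $O(d)$-invariant measure $\sigma(dy)\,Q_t^\ell(y,dy_1)$ over these orbits, and concludes swap-invariance from uniqueness of the invariant measure on each compact homogeneous orbit together with the fact that the swap commutes with the action and fixes every orbit. What the paper's computation buys is that it is entirely elementary and self-contained; what your argument buys is a structural explanation---it makes transparent that the result rests on the two-point homogeneity of $S_{d-1}$ under $O(d)$, and would transfer verbatim to any compact group acting two-point homogeneously on $S$ with the swap preserving orbits.
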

\begin{proof} 1. Recall the notation of Section \ref{sec1}. Then duality with itself of $\{\theta,P_{y,z}\}$ with 
respect to some measure  $(\pi_i,\,i\in S)$ (i.e. reversibility) is equivalent to $\pi_i q_{ij}=\pi_j q_{ji}$, for all $i,j\in S$. Since
$\Delta_{ij}\ed\Delta_{ji}$, we have $\pi_iG(u)_{ij}=\pi_jG(u)_{ji}$, which implies
\begin{equation}\label{8894}
\pi_iE_{i,0}(e^{u \xi_t},\theta_t=j)=\pi_jE_{j,0}(e^{u\xi_t},\theta_t=i)\,,
\end{equation}
and proves that $\{(\theta,\xi),P_{y,z}\}$ is reversible with respect to $(\pi_i,\,i\in S)$. Conversely, (\ref{8894}) with $u=0$ implies 
that $\{\theta,P_{y,z}\}$ is reversible with respect to  $(\pi_i,\,i\in S)$ and furthermore that $\Delta_{ij}\ed\Delta_{ji}$, for all $i,j\in S$.

2. We easily check that the law defined in (\ref{3492}) is that of a MAP. Then (\ref{1363}) follows directly from the particular form of this 
law.     
  
3. Then we prove the result in the isotropic case. Let us denote by $O(d)$ the orthogonal group and by $H(dt)$ 
the Haar measure on this group. It is known that since $O(d)$ acts transitively on the sphere $S_{d-1}$, then for any $y_1\in S_{d-1}$
and any positive Borel function $f$, defined on $S_{d-1}$,
\begin{equation}\label{5626}
\int_{O(d)}f(hy_1)H(dh)=\int_{S_{d-1}}f(y)\,dy\,.
\end{equation}
Let $g$ be another positive Borel function defined on $S_{d-1}$ and $\lambda\in\mathbb{R}$. Assume moreover that
$f(\delta)=g(\delta)=0$, then from the assumption and (\ref{5626}), for any $y_1\in S_{d-1}$, 
\begin{eqnarray*}
\int_{S_{d-1}}g(y)E_{y,0}(e^{i\lambda\xi_t}f(\theta_t))dy&=&\int_{O(d)}g(hy_1)E_{hy_1,0}(e^{i\lambda\xi_t}f(\theta_t))H(dh)\\
&=&\int_{O(d)}g(hy_1)E_{y_1,0}(e^{i\lambda\xi_t}f(h^{-1}\theta_t),t<\zeta_p)H(dh)\\
&=&E_{y_1,0}\left(e^{i\lambda\xi_t}\int_{O(d)}g(hy_1)f(h^{-1}\theta_t)H(dh),t<\zeta_p\right)\,.
\end{eqnarray*}
Let $h'\in O(d)$ such that $y_1=h'\theta_t$ and make the change of variables $h''=hh'$, then since $H(dh)$ is 
the Haar measure, we have 
\begin{eqnarray*}
&&E_{y_1,0}\left(e^{i\lambda\xi_t}\int_{O(d)}g(hy_1)f(h^{-1}\theta_t)H(dh),t<\zeta_p\right)\\
&=&E_{y_1,0}\left(e^{i\lambda\xi_t}\int_{O(d)}g(hh'\theta_t)f(h^{-1}h'^{-1}y_1)H(dh),t<\zeta_p\right)\\
&=&E_{y_1,0}\left(e^{i\lambda\xi_t}\int_{O(d)}g(h''\theta_t)f(h''^{-1}y_1)H(dh''),t<\zeta_p\right)\\
&=&\int_{S_{d-1}}f(y)E_{y,0}(e^{i\lambda\xi_t}g(\theta_t))dy\,,
\end{eqnarray*}
where we have used the assumption and (\ref{5626}) again in the last equality. Then comparing the first and the last 
term in the above equalities gives (\ref{1363}).    
\end{proof}

\noindent Examples given in this proposition lead to the definition of two important classes of ssMp's: those who satisfy the 
skew product property and those who are isotropic. 

\begin{defi}\label{4362} Let $\{(\theta,\xi),P_{y,z}\}$ and $\{X,\p_x\}$ be respectively a MAP and a ssMp which are related to 
each other through the representation of Theorem $\ref{2461}$. 

Then we say that $\{(\theta,\xi),P_{y,z}\}$ and $\{X,\p_x\}$ satisfy the skew product property if the transition probabilities of the 
MAP $\{(\theta,\xi),P_{y,z}\}$ have the form $(\ref{3492})$. $($Note that according to this definition, the process 
$\{\theta',P_y^{\theta'}\}$ involved in Proposition $\ref{3419}$ is not necessarily in duality with itself.$)$
 
We say that $\{(\theta,\xi),P_{y,z}\}$ and $\{X,\p_x\}$ are isotropic if the MAP $\{(\theta,\xi),P_{y,z}\}$ satisfies conditions of 
part $3$ of Proposition~$\ref{3419}$.
\end{defi}     

\noindent Note that this common definition of isotropy for $\{(\theta,\xi),P_{y,z}\}$ and $\{X,\p_x\}$ relies on the fact that for any 
orthogonal transformation $T$ of $S_{d-1}$, $(\theta_t,\xi_t)_{t\ge0}\ed(T(\theta_t),\xi_t)_{t\ge0}$, under $P_{y,z}$, for all 
$(y,z)\in S_{d-1}\times\mathbb{R}$ if and only if $(X_t)_{t\ge0}\ed(T(X)_t)_{t\ge0}$ under $\p_x$, for all 
$x\in\mathbb{R}^d\setminus\{0\}$. Let us also stress the following facts. \\

\begin{rem} If $\{X,\p_x\}$ is isotropic, then its norm is a positive ssMp. Hence from the Lamperti representation of $\|X\|$, the 
process $\xi$ appearing in the representation of $\{X,\p_x\}$, given in Theorem $\ref{2461}$ is a possibly killed L\'evy process. 
This fact can also be derived directly from the identity $\{(\theta,\xi),P_{y,z}\}=\{(T(\theta),\xi),P_{T^{-1}(y),z}\}$, where $T$ is any
transformation of $O(d)$ and Remark $\ref{1964}$.
\end{rem}
\begin{rem} From an obvious extension of part $3$ of Proposition $\ref{3419}$, if there exists a one-to-one measurable transformation 
$\varphi:S\rightarrow S$ such that the MAP $\{(\varphi(\theta),\xi),P_{y,z}\}$ is isotropic, then the MAP $\{(\theta,\xi),P_{y,z}\}$ is 
reversible with respect to the measure $d\varphi^{-1}(s)dx$, where $d\varphi^{-1}(s)$ is the image by $\varphi$ of the Lebesgue 
measure on $S_{d-1}$.
\end{rem}
\begin{rem} Lemma $\ref{9362}$ provides a very simple means to construct a non reversible MAP. Indeed, it suffices to consider a non 
reversible continuous time Markov chain with values in a finite set of $S_{d-1}$ and to couple it with any independent L\'evy process,
in the same way as in $(\ref{3492})$.
\end{rem}

Before stating the main result of this section, we need the following definitions. Let $\pi(dy)$ be any $\sigma$-finite measure on $S$, 
then we define the measure $\Lambda_\pi(dx)$ on $H$ as the image of the measure $\pi(dy)dz$ by the function $\phi$ defined in 
(\ref{3456}). Now let $\{\theta,\xi\}$ be a MAP which is reversible with respect to the measure $\pi(dy)dz$. As already seen in the proof
of Theorem \ref{2461}, with the probability measures $\p_x=P_{x/\|x\|,\log\|x\|}$, for $x\in H$ and 
$\p_0=P_\delta$, the process $\{Y,\p_x\}$, where $Y_t=\theta_te^{\xi_t}$, is an $H$-valued Markov
process absorbed at 0. Then following \cite{re}, see also p.240 in \cite{wa}, for $\alpha>0$, we define the measure 
$\nu_\pi^\alpha$ associated to the additive functional $t\mapsto A_t:=\int_0^t\|Y_s\|^{\alpha}\,ds$ of $\{Y,\p_x\}$ by
\[\int_H f(x)\nu_\pi^\alpha(dx)=\lim_{t\downarrow0}\int_H\e_x\left(t^{-1}\int_0^t f(Y_s)dA_s\right)\,\Lambda_\pi(dx)\,,\]
where $f$ is any positive Borel function, that is 
\begin{equation}\label{3832}
\nu_\pi^\alpha(dx)=\|x\|^\alpha \Lambda_\pi(dx)\,.
\end{equation}
In the special case where $\pi(dy)$ is absolutely continuous with density $\pi(y)$, the measure $\nu_\pi^\alpha$ can be 
explicited as 
\begin{equation}\label{3072}
\nu_\pi^\alpha(dx)=\pi(x/\|x\|)\|x\|^{\alpha-d}\,dx\,.
\end{equation}

\begin{theorem}\label{5372} Let $\{X,\p_x\}$ be a ssMp with values in $H$, with index $\alpha\ge0$ and lifetime $\zeta_c$, and 
let $\{(\theta,\xi),P_{y,z}\}$ be the MAP which is associated to $\{X,\p_x\}$ through the transformation of Theorem $\ref{2461}$.
Define the process
\begin{equation}\label{9825}
\widehat{X}_t=\left\{\begin{array}{lll}
\frac{X_{\gamma_t}}{\|X\|^2_{\gamma_t}}\,,\;\;&\mbox{if}\;\;\;t<\int_0^{\zeta_c}\frac{ds}{\|X_s\|^{2\alpha}}\,,\\
0\,,\;\;&\mbox{if}\;\;\;t\ge\int_0^{\zeta_c}\frac{ds}{\|X_s\|^{2\alpha}}\,,
\end{array}\right.
\end{equation}
where $\gamma_t$ is the time change $\gamma_t=\inf\left\{s:\int_0^s\|X\|_u^{-2\alpha}du>t\right\}$, for 
$t<\int_0^{\zeta_c}\frac{ds}{\|X_s\|^{2\alpha}}$. Define also the probability measures $\widehat{\p}_x:=\p_{x/\|x\|^2}$, for $x\in H$ and $\widehat{\p}_0:=\p_{0}$. Then the process $\{\widehat{X},\widehat{\p}_x\}$ is a ssMp with values in $H$, with index $\alpha$ and 
lifetime $\int_0^{\zeta_c}\frac{ds}{\|X_s\|^{2\alpha}}$. The MAP which is associated to $\{\widehat{X},\widehat{\p}_x\}$ through the
transformation of Theorem $\ref{2461}$ is $\{(\theta,-\xi),P_{y,-z}\}$.
Moreover, $\{X,\p_x\}$ and $\{\widehat{X},\widehat{\p}_x\}$ are in duality with 
respect to the measure $\nu_\pi^\alpha(dx)$ defined in $(\ref{3832})$ if and only if the MAP $\{(\theta,\xi),P_{y,z}\}$ is reversible with 
respect to the measure $\pi(ds)dx$. 
\end{theorem}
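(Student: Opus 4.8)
The plan is to separate the two structural assertions (that $\{\widehat X,\widehat\p_x\}$ is a ssMp of index $\alpha$ and that its associated MAP is $\{(\theta,-\xi),P_{y,-z}\}$) from the duality assertion, because the former is a direct computation with the Lamperti representation while the latter needs a genuine time-change argument. First I would write the representation of Theorem \ref{2461} explicitly: $X_t=\theta_{\tau_t}e^{\xi_{\tau_t}}$ with $\tau_t=\inf\{s:\int_0^se^{\alpha\xi_u}\,du>t\}$, so that $\|X\|_t=e^{\xi_{\tau_t}}$ and the spatial inversion reads
\[
\frac{X_{t}}{\|X\|_{t}^2}=\theta_{\tau_t}e^{-\xi_{\tau_t}}.
\]
Thus inversion in space is exactly the replacement of $\xi$ by $-\xi$. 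To absorb the time change $\gamma$, I would substitute $u=\int_0^v e^{\alpha\xi_r}\,dr$ (so $du=e^{\alpha\xi_v}\,dv$ and $\tau_u=v$), giving
\[
\int_0^t\|X\|_u^{-2\alpha}\,du=\int_0^{\tau_t}e^{-2\alpha\xi_v}e^{\alpha\xi_v}\,dv=\int_0^{\tau_t}e^{-\alpha\xi_v}\,dv,
\]
so the defining relation $\int_0^{\gamma_t}\|X\|_u^{-2\alpha}\,du=t$ becomes $\tau_{\gamma_t}=\widehat\tau_t:=\inf\{s:\int_0^se^{-\alpha\xi_v}\,dv>t\}$. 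Substituting back yields $\widehat X_t=\theta_{\widehat\tau_t}e^{-\xi_{\widehat\tau_t}}$, which is precisely the Lamperti transform of $\{(\theta,-\xi),P_{y,-z}\}$. Since this process was already noted to be a MAP at the start of the section, the direct part of Theorem \ref{2461} applied to $(\theta,-\xi)$ gives at once that $\{\widehat X,\widehat\p_x\}$ is an $H$-valued ssMp of index $\alpha$ with lifetime $\int_0^{\zeta_p}e^{-\alpha\xi_s}\,ds=\int_0^{\zeta_c}\|X_s\|^{-2\alpha}\,ds$ and identifies its associated MAP as $\{(\theta,-\xi),P_{y,-z}\}$; one also checks $\widehat\p_x=P_{x/\|x\|,-\log\|x\|}=\p_{x/\|x\|^2}$, matching the statement.

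For the duality claim I would first reduce it to a duality of the untimechanged processes. By Lemma \ref{9362}, reversibility of $\{(\theta,\xi),P_{y,z}\}$ with respect to $\pi(ds)\,dx$ is equivalent to the weak duality of $\{(\theta,\xi),P_{y,z}\}$ and $\{(\theta,-\xi),P_{y,-z}\}$ with respect to $\pi(ds)\,dx$. Transporting this through the homeomorphism $\phi(y,z)=ye^z$ of (\ref{3456}) and writing $Y_t=\theta_te^{\xi_t}$, $\widehat Y_t=\theta_te^{-\xi_t}$ for the $\phi$-images of the two MAPs, this is in turn equivalent to weak duality of $\{Y,\p_x\}$ and $\{\widehat Y,\widehat\p_x\}$ with respect to $\Lambda_\pi=\phi_*(\pi(ds)\,dx)$, since weak duality is preserved under a state-space homeomorphism together with the push-forward of the reference measure. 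Recall that $X$ and $\widehat X$ are the time changes of $Y$ and $\widehat Y$ by the inverses of the additive functionals $\int_0^s\|Y_u\|^\alpha\,du$ and $\int_0^s\|\widehat Y_u\|^\alpha\,du$, both built from the \emph{same} multiplier $\|\cdot\|^\alpha$.

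The core of the argument is to push the duality of $Y,\widehat Y$ through these time changes, and I would do this at the level of $q$-resolvents. A change of variables in the time change turns the $q$-resolvent of $X$ into
\[
U^q_Xf(x)=\ex\Big(\int_0^{\zeta_p}e^{-q\int_0^s\|Y_u\|^\alpha du}\,\|Y_s\|^\alpha f(Y_s)\,ds\Big),
\]
i.e. the $0$-potential, evaluated at $\|\cdot\|^\alpha f$, of the process $Y^{(q)}$ obtained from $Y$ by killing with the multiplicative functional $e^{-q\int_0^s\|Y_u\|^\alpha du}$; the analogous identity holds for $\widehat X$ and $\widehat Y^{(q)}$. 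Because this killing rate depends only on position, weak duality of $Y,\widehat Y$ with respect to $\Lambda_\pi$ passes to the killed pair $Y^{(q)},\widehat Y^{(q)}$ with respect to the \emph{same} $\Lambda_\pi$ (a standard Feynman–Kac/Duhamel expansion argument). Writing $\nu_\pi^\alpha(dx)=\|x\|^\alpha\Lambda_\pi(dx)$ as in (\ref{3832}) and setting $\tilde f=\|\cdot\|^\alpha f$, $\tilde g=\|\cdot\|^\alpha g$, the extra factors $\|\cdot\|^\alpha$ combine so that
\[
\int_H g\,U^q_Xf\,d\nu_\pi^\alpha=\int_H\tilde g\,U_{Y^{(q)}}\tilde f\,d\Lambda_\pi=\int_H\tilde f\,U_{\widehat Y^{(q)}}\tilde g\,d\Lambda_\pi=\int_H f\,U^q_{\widehat X}g\,d\nu_\pi^\alpha,
\]
for every $q>0$; being equivalent to the semigroup identity (\ref{7434}), this is weak duality of $X$ and $\widehat X$ with respect to $\nu_\pi^\alpha$. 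The converse follows by reading the same chain of equivalences backwards, each step being an equivalence.

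The step I expect to be the main obstacle is exactly this transfer of duality through the time change, i.e. the middle display: one must control \emph{all} $q>0$ (the bare $q=0$ potential identity does not suffice for weak duality) and justify that killing by the position-dependent multiplicative functional preserves duality with respect to the unchanged measure $\Lambda_\pi$. Some care is also required with the lifetimes and the absorbing state, and with the fact that $\int_0^s\|Y_u\|^\alpha\,du$ and $\int_0^s\|\widehat Y_u\|^\alpha\,du$ are only strictly increasing up to the lifetime; invoking the Revuz-measure description of $\nu_\pi^\alpha$ recorded just before the theorem is the clean way to package this bookkeeping.
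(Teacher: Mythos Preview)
Your first part---the identification $\widehat X_t=\theta_{\widehat\tau_t}e^{-\xi_{\widehat\tau_t}}$ via the substitution $\tau_{\gamma_t}=\widehat\tau_t$, followed by an application of the direct half of Theorem~\ref{2461} to $(\theta,-\xi)$---is exactly what the paper does, down to the same change of variables $\int_0^t\|X\|_u^{-2\alpha}\,du=\int_0^{\tau_t}e^{-\alpha\xi_v}\,dv$.

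Your duality argument is correct but takes a different route from the paper. Both proofs first reduce, via Lemma~\ref{9362} and the homeomorphism $\phi$, to the weak duality of $Y=\theta e^{\xi}$ and $\widehat Y=\theta e^{-\xi}$ with respect to $\Lambda_\pi$. From there the paper simply invokes Walsh's time-change duality theorem (Theorem~4.5, p.~241 in \cite{wa}): if two processes are in duality with respect to $\Lambda_\pi$, then their time changes by the inverses of the additive functionals $\int_0^\cdot\|Y_s\|^\alpha\,ds$ and $\int_0^\cdot\|\widehat Y_s\|^\alpha\,ds$ are in duality with respect to the associated Revuz measure $\nu_\pi^\alpha=\|x\|^\alpha\Lambda_\pi$; the converse is obtained by applying the same theorem to $X,\widehat X$ and the functionals $\int_0^\cdot\|X_s\|^{-\alpha}\,ds$, $\int_0^\cdot\|\widehat X_s\|^{-\alpha}\,ds$. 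You instead unpack this step by hand: rewriting $U^q_X$ as the $0$-potential of the Feynman--Kac-killed process $Y^{(q)}$ applied to $\|\cdot\|^\alpha f$, observing that killing by a position-dependent rate preserves duality with respect to the same measure, and then rebalancing the weights to land on $\nu_\pi^\alpha$. What your approach buys is self-containment---no appeal to \cite{wa}---at the cost of having to justify the Feynman--Kac duality preservation and the finiteness/measurability bookkeeping that Walsh's theorem already packages; what the paper's approach buys is a two-line argument in each direction once that reference is accepted. Your closing remark that the Revuz-measure formalism is ``the clean way to package this bookkeeping'' is precisely the paper's choice.
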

\begin{proof} Recall the definition of the MAP $\{(\theta,-\xi),P_{y,-z}\}$ given before Lemma \ref{9362}. Then let us define 
$\{\widehat{Y},\widehat{\p}_x\}$, where $\widehat{\p}_x$ is as in the statement and $\widehat{Y}_t:=\theta_t e^{-\xi_t}$, if $t<\zeta_p$ 
and $\widehat{Y}_t=0$, if $t\ge\zeta_p$. From the same arguments as those developed at the beginning of the proof of Theorem 
\ref{2461}, we obtain that $\{\widehat{Y},\widehat{\p}_x\}$ is a standard Markov process, which possesses a reference measure. 
Now set $\hat{\tau}_t=\inf\{s:\int_0^se^{-\alpha\xi_u}\,du>t\}$ and define
\[\left\{\begin{array}{ll}
Z_t=\theta_{\hat{\tau}_t}e^{-\xi_{\hat{\tau}_t}}\,,&\mbox{if}\;\;t<\int_0^{\zeta_p}e^{-\alpha\xi_u}\,du\,,\\
Z_t=0\,,&\mbox{if}\;\;t\ge\int_0^{\zeta_p}e^{-\alpha\xi_u}\,du\,.
\end{array}\right.\]
Since the process $\{Z,\widehat{\p}_x\}$ is constructed from the MAP $\{(\theta,-\xi),P_{y,-z}\}$ through the transformation of Theorem
$\ref{2461}$, we derive from this theorem that it is a ssMp with index $\alpha$, absorbed at 0. Moreover, it is clearly $H$-valued. 
Now let us check that for $t<\int_0^{\zeta_p}e^{-\alpha\xi_u}\,du$, $Z_t=\frac{X_{\gamma_t}}{\|X\|^2_{\gamma_t}}$. First note that from 
a change of variables, $\int_0^s\exp(-2\alpha\xi_{\tau_u})\,du=\int_0^{\tau_s}\exp(-\alpha\xi_{u})\,du$, so that with 
$\zeta_c=\int_0^{\zeta_p}\exp(\alpha\xi_{u})\,du$, we obtain 
$\int_0^{\zeta_c}\frac{ds}{\|X_s\|^{2\alpha}}=\int_0^{\zeta_p}e^{-\alpha\xi_u}\,du$. Besides it follows from the definitions that for
$t<\int_0^{\zeta_p}e^{-\alpha\xi_u}\,du$,
\begin{eqnarray*}
\gamma_t=\inf\{s,\int_0^s\exp(-2\alpha\xi_{\tau_u})\,du>t\}&=&\inf\{s,\int_0^{\tau_s}\exp(-\alpha\xi_{u})\,du>t\}\\
&=&\tau^{-1}(\hat{\tau}_t)\,, 
\end{eqnarray*}
where $\tau^{-1}$ is the right continuous inverse of $\tau$. Therefore, 
\begin{eqnarray*}
\frac{X_{\gamma_t}}{\|X\|^2_{\gamma_t}}&=&\theta_{\tau(\gamma_t)}e^{-\xi_{\tau(\gamma_t)}}\\
&=&\theta_{\hat{\tau}_t}e^{-\xi_{\hat{\tau}_t}}\,.
\end{eqnarray*}

Then assume that the MAP $\{(\theta,\xi),P_{y,z}\}$ is reversible with respect to the measure $\pi(ds)dx$.
Recall from the proof of Theorem $\ref{2461}$, the definition of the standard process $\{Y,\p_x\}$ which is constructed from 
the MAP $\{(\theta,\xi),P_{y,z}\}$. From the assumption, $\{Y,\p_x\}$ and $\{\widehat{Y},\widehat{\p}_x\}$ are in duality with 
respect to the measure $\Lambda_\pi(dx)$. Indeed, we derive from obvious changes of variables and the definition of 
$\Lambda_\pi(dx)$ that for any positive Borel functions $f$ and $g$,
\begin{eqnarray*}
\int_{S\times\mathbb{R}}f(ye^z)E_{y,z}(g(\theta_te^{\xi_t}))\,\pi(dy)\,dz=\int_{H}f(x)\e_x(g(Y_t))\,
\Lambda_\pi(dx)\,,
\end{eqnarray*}
and
\begin{eqnarray*}
\int_{S\times\mathbb{R}}g(ye^{z})E_{y,-z}(f(\theta_te^{-\xi_t}))\,\pi(dy)\,dz=\int_{H}g(x)
\widehat{\e}_x(f(\widehat{Y}_t))\Lambda_\pi(dx)\,.
\end{eqnarray*}
Since $\{(\theta,\xi),P_{y,z}\}$ is reversible, from Lemma \ref{9362} the MAP's $\{(\theta,\xi),P_{y,z}\}$ and $\{(\theta,-\xi),P_{y,-z}\}$ are 
in duality with respect to the measure $\pi(dy)\,dz$ and we derive from the above identities that 
\[\int_{H}f(x)\e_x(g(Y_t))\,\Lambda_\pi(dx)=\int_{H}g(x)\widehat{\e}_x(f(\widehat{Y}_t))\Lambda_\pi(dx)\,,\]
which proves the duality between $\{Y,\p_x\}$ and $\{\widehat{Y},\widehat{\p}_x\}$, with respect to the measure $\Lambda_\pi(dx)$.
Then from this duality and Theorem 4.5, p.~241 in \cite{wa}, we deduce that $\{X,\p_x\}$ and $\{\widehat{X},\widehat{\p}_x\}$ are 
Markov processes which are in duality with respect to the measure $\nu_\pi^\alpha(dx)=\|x\|^{\alpha}\Lambda_\pi(dx)$. 

Conversely if $\{X,\p_x\}$ and $\{\widehat{X},\widehat{\p}_x\}$ are in duality with respect to the measure $\nu_\pi^\alpha(dx)$, then
Theorem 4.5, p.~241 in \cite{wa} can be applied to $\{X,\p_x\}$ and $\{\widehat{X},\widehat{\p}_x\}$ and to the additive functionals 
$t\mapsto \int_0^{t}\frac{ds}{\|X_s\|^{\alpha}}$ and $t\mapsto\int_0^{t}\frac{ds}{\|\widehat{X}_s\|^{\alpha}}$ which are strictly increasing,
on $[0,\zeta_c)$ and $[0,\hat{\zeta}_c)$, respectively, where $\hat{\zeta}_c=\int_0^{\zeta_c}\frac{ds}{\|X_s\|^{2\alpha}}$ is the lifetime
of $\widehat{X}$. Then by reading the above arguments in reverse order, we prove that the MAP $\{(\theta,\xi),P_{y,z}\}$ is reversible 
with respect to the measure $\pi(ds)dx$.   
\end{proof}

\begin{rem} It readily follows from the previous proof that the transformation of Theorem $\ref{5372}$ is invertible, namely, 
with obvious notation: 
\[X_t=\left\{\begin{array}{lll}
\frac{\widehat{X}_{\hat{\gamma}_t}}{\|\widehat{X}\|^2_{\hat{\gamma}_t}}\,,\;\;&\mbox{if}\;\;\;t<\int_0^{\hat{\zeta}_c}\frac{ds}
{\|\widehat{X}_s\|^{2\alpha}}\,,\\
0\,,\;\;&\mbox{if}\;\;\;t\ge\int_0^{\hat{\zeta}_c}\frac{ds}{\|\widehat{X}_s\|^{2\alpha}}\,.
\end{array}\right.\]
\end{rem}
\begin{rem} In the case where $\{X,\p_x\}$ is a positive ssMp $($i.e. $d=1$ and $S=\{1\}$$)$, the fact that $\{X,\p_x\}$ is 
in duality with respect to the positive ssMp associated with the L\'evy process $-\xi$, in the Lamperti representation, was
proved in \cite{by}. It is clear that any positive ssMp satisfies the skew product property according to Definition $\ref{4362}$, so
in this case the result follows from Proposition $\ref{3419}$ and Theorem $\ref{5372}$.
\end{rem}
\begin{rem} From Proposition $\ref{3419}$, Theorem $\ref{5372}$ also applies when $\{X,\p_x\}$ is an isotropic ssMp. The existence 
of a dual process with respect to the measure $\|x\|^{\alpha-d}\,dx$ in the isotropic case was already obtained in \cite{gv}, where the 
proof relies on the wrong observation that isotropic ssMp's also satisfies the skew product property. In \cite{lw} the authors proved 
that actually if $\{(\theta,\xi),P_{y,z}\}$ is isotropic then $\theta$ and $\xi$ are independent $($i.e. $\{(\theta,\xi),P_{y,z}\}$ satisfies the 
skew product property$)$ if and only if the processes $(\xi_{\tau_t})$ and $(\theta_t)$ do not jump together, $P_{y,z}$-a.s. for all 
$y,z$. As noticed in \cite{lw}, it is not the case of isotropic stable L\'evy processes which will be studied in Section $\ref{7325}$.\\
\end{rem}

Duality of ssMp's with themselves is also an interesting property and jointly with the duality of Theorem \ref{5372} it allows us to 
obtain excessive functions, as shown in the next section. The next proposition which will be useful later on, asserts that this self 
duality holds if and only if the same property holds for the underlying MAP. 

\begin{proposition}\label{9543} 
Let $\{X,\p_x\}$ and  $\{(\theta,\xi),P_{y,z}\}$ be as in Theorem $\ref{2461}$. Then the ssMp $\{X,\p_x\}$ is in duality with  
itself with respect to some measure $M(dx)$ on $H$ if and only if the MAP $\{(\theta,\xi),P_{y,z}\}$ is in duality with itself with respect to 
the image $\eta(dy,dz)$ on $S\times \mathbb{R}$ of the measure $\|x\|^{-\alpha}M(dx)$ through the function $\phi^{-1}$, where $\phi$ is 
defined in $(\ref{3456})$.

In particular, if $M(dx)$ has a density which can be splitted as the product of an angular and a radial part, that is 
\[M(dx)=\pi(x/\|x\|)r(\|x\|)\,dx\,,\]
where $\pi$ and $r$ are nonnegative Borel functions which are respectively defined on $S$ and $\mathbb{R}_+$, then the measure 
$\eta$ on $S\times \mathbb{R}$ has the following form,
\[\eta(dy,dz)=\pi(y)e^{(d-\alpha)z}r(e^z)\,dydz\,.\] 
If moreover the MAP $\{(\theta,\xi),P_{y,z}\}$ satisfies the skew product property, then the Markov process, $\{\theta,P_{y,z}\}$ on $S$ 
is in duality with itself with respect to the measure $\pi(y)\,dy$ on $S$.
\end{proposition}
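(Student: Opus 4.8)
The plan is to factor the whole correspondence through the intermediate process $Y_t=\theta_te^{\xi_t}$ already used in the proofs of Theorems \ref{2461} and \ref{5372}, splitting the claimed equivalence into two independent changes of measure: the one induced by the homeomorphism $\phi$ of (\ref{3456}), and the one induced by the Lamperti time change turning $Y$ into $X$. Throughout I would use the self-duality form of (\ref{7434}), i.e.\ duality of a process with itself.

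First I would record the homeomorphism transport. Since $Y$ is the image of $\{(\theta,\xi),P_{y,z}\}$ under the homeomorphism $\phi(y,z)=ye^z$, the two semigroups are conjugate, $P_t^Y f=\bigl(P_t^{(\theta,\xi)}(f\circ\phi)\bigr)\circ\phi^{-1}$. Writing (\ref{7434}) for $Y$ and pulling the test functions back through $\phi$ shows at once, by a change of variables in the integrals, that $\{Y,\p_x\}$ is in duality with itself with respect to a measure $\Lambda(dx)$ on $H$ if and only if $\{(\theta,\xi),P_{y,z}\}$ is in duality with itself with respect to the image of $\Lambda$ through $\phi^{-1}$. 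Next I would invoke the time-change transport: $X$ is the time change of $Y$ by the strictly increasing continuous additive functional $t\mapsto\int_0^t\|Y_s\|^\alpha\,ds$, exactly as in the proof of Theorem \ref{5372}, so Theorem 4.5, p.~241 in \cite{wa}, read in both directions since the time change is invertible (cf.\ the converse part of the proof of Theorem \ref{5372}), gives that $\{Y,\p_x\}$ is self-dual with respect to $\Lambda(dx)$ if and only if $\{X,\p_x\}$ is self-dual with respect to $\|x\|^\alpha\Lambda(dx)$, compare (\ref{3832}). Combining the two transports with $\Lambda(dx)=\|x\|^{-\alpha}M(dx)$ yields the stated equivalence, $\eta$ being the image of $\|x\|^{-\alpha}M(dx)$ through $\phi^{-1}$. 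I expect this time-change step to be the main obstacle, but it is precisely the ingredient already established in the proof of Theorem \ref{5372} and can be reused.

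For the explicit radial/angular form I would pass to polar coordinates $x=ye^z$, under which the Lebesgue measure reads $dx=e^{dz}\,dy\,dz$, with $dy$ the surface measure on $S_{d-1}$, since $\|x\|^{d-1}d\|x\|=e^{dz}\,dz$. Substituting $M(dx)=\pi(x/\|x\|)r(\|x\|)\,dx$ then gives
\[\|x\|^{-\alpha}M(dx)=\pi(y)\,r(e^z)\,e^{-\alpha z}\,e^{dz}\,dy\,dz=\pi(y)\,e^{(d-\alpha)z}r(e^z)\,dy\,dz,\]
which is the announced expression for $\eta(dy,dz)$.

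Finally, for the marginalisation under the skew product property, I would use that (\ref{3492}) makes $\theta$ and $\xi$ independent, with $\theta$ distributed as the unkilled process $\theta'$ and $\xi$ as the L\'evy process $\xi'$, the whole MAP being killed at the constant rate $\lambda$. Writing $\eta(dy,dz)=\pi(y)w(z)\,dy\,dz$ with $w(z)=e^{(d-\alpha)z}r(e^z)$ and evaluating the self-duality of $\{(\theta,\xi),P_{y,z}\}$ on product functions $f=f_1f_2$, $g=g_1g_2$, the factor $e^{-\lambda t}$ cancels and Tonelli splits the double integral, giving $A_t(f_1,g_1)\,B_t(f_2,g_2)=A_t(g_1,f_1)\,B_t(g_2,f_2)$, where $A_t(f_1,g_1)=\int_S g_1\,(P^{\theta'}_tf_1)\,\pi\,dy$ and $B_t(f_2,g_2)=\int_{\mathbb{R}} g_2\,(Q_tf_2)\,w\,dz$, with $Q_t$ the semigroup of $\xi'$. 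If $A_t(f_1,g_1)\neq A_t(g_1,f_1)$ for some nonnegative $f_1,g_1$, then (using $B_t(h,h)>0$ for a suitable bump $h$) the ratio $B_t(f_2,g_2)/B_t(g_2,f_2)$ would be a constant $c\ge0$ independent of $f_2,g_2$; swapping $f_2$ and $g_2$ forces $c^2=1$, hence $c=1$, contradicting $A_t(f_1,g_1)\neq A_t(g_1,f_1)$. Therefore $A_t(f_1,g_1)=A_t(g_1,f_1)$ for all $t$ and all nonnegative $f_1,g_1$, which is exactly the self-duality of $\theta'$, equivalently of $\{\theta,P_{y,z}\}$ since the symmetric killing factor $e^{-\lambda t}$ is irrelevant, with respect to $\pi(y)\,dy$.
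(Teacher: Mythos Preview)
Your proposal is correct and follows essentially the same route as the paper: factor through the intermediate process $Y_t=\theta_te^{\xi_t}$, transfer self-duality between $X$ and $Y$ via Walsh's time-change theorem (Theorem~4.5, p.~241 in \cite{wa}), and then between $Y$ and $(\theta,\xi)$ by the change of variables through $\phi$. The paper's proof is much terser---it dismisses the polar-coordinate computation and the skew-product marginalisation as ``straightforward''---whereas you spell these out in full; in particular your factorisation argument for extracting the self-duality of $\theta$ from that of the product MAP is more careful than anything the paper writes down, but it is not a different idea, just a worked-out version of the same one.
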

\begin{proof} Recall the notation  of the proof of Theorem \ref{5372}. Then for any
positive Borel functions $f$  and $g$, we have 
\[\int_{H}f(x)\e_x(g(X_t))\,M(dx)=\int_{H}g(x)\e_x(f(X_t))M(dx)\,.\]
Applying Theorem 4.5, p.~241 in \cite{wa}, we obtain that 
\[\int_{H}f(x)\e_x(g(Y_t))\,\|x\|^{-\alpha}M(dx)=\int_{H}g(x)\e_x(f(Y_t))\|x\|^{-\alpha}M(dx)\,,\]
which gives from a change of variables,
\begin{eqnarray*}
\int_{S\times\mathbb{R}}g(ye^{z})E_{y,z}(f(\theta_te^{\xi_t}))\,\eta(dy,dz)=
\int_{S\times\mathbb{R}}f(ye^{z})E_{y,z}(g(\theta_te^{\xi_t}))\,\eta(dy,dz).
\end{eqnarray*}
The other assertions are straightforward.
\end{proof}
 
\section{Inversion and Doob $h$-transforms for ssMp's}\label{7325}

We begin this section with a simple observation on the relationship between duality and Doob $h$-transform of Markov 
processes. 

\begin{lemma}\label{2789} Let $(P_t^{(1)})$, $(P_t^{(2)})$ and $(P^{(3)}_t)$ be the semigroups of three 
$H$-valued Markov processes absorbed at $0$. Assume that  $(P_t^{(1)})$ and $(P_t^{(2)})$ are in 
weak duality with respect to  $h_1(x)\,dx$ and that $(P_t^{(1)})$ and $(P_t^{(3)})$ are in weak duality with respect to 
$h_2(x)\,dx$, where $h_1$ and $h_2$ are positive and continuous functions. Assume moreover that $P_t^{(2)}$ and 
$P^{(3)}_t$ are Feller semigroups on $H$.

Then $h:=h_1/h_2$ is excessive for $(P_t^{(3)})$ and the Markov process with semigroup $(P_t^{(2)})$ is an $h$-process of  the
Markov process with semigroup $(P_t^{(3)})$, with respect to the function $h$, that is 
\begin{equation}\label{4350}
P_t^{(2)}g(x)=\frac{1}{h(x)}P_t^{(3)}(hg)(x)\,,
\end{equation}
for all $t\ge0$, $x\in H$ and  all positive Borel functions $g$. 

Conversely, if $(\ref{4350})$ holds and if $(P_t^{(1)})$ and $(P_t^{(2)})$ are in weak duality with respect to  $h_1(x)\,dx$, then 
$(P_t^{(1)})$ and $(P_t^{(3)})$ are in weak duality with respect to $h_2(x)\,dx$.
\end{lemma}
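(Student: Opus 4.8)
The plan is to rewrite both duality hypotheses as weighted bilinear identities and to eliminate $(P_t^{(1)})$ between them. Writing $\langle\varphi,\psi\rangle_k:=\int_H\varphi(x)\psi(x)\,k(x)\,dx$ for the pairing attached to a weight $k$, the two weak duality assumptions read, for all positive Borel $f,g$,
\[\langle g,P_t^{(1)}f\rangle_{h_1}=\langle f,P_t^{(2)}g\rangle_{h_1}\qquad\text{and}\qquad\langle g,P_t^{(1)}f\rangle_{h_2}=\langle f,P_t^{(3)}g\rangle_{h_2}.\]
Since $h_1=h\,h_2$ with $h=h_1/h_2$, I can move the factor $h$ inside the $h_2$-pairing: the first identity becomes $\langle hg,P_t^{(1)}f\rangle_{h_2}=\langle f,h\,P_t^{(2)}g\rangle_{h_2}$. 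On the other hand, replacing $g$ by $hg$ in the second identity gives $\langle hg,P_t^{(1)}f\rangle_{h_2}=\langle f,P_t^{(3)}(hg)\rangle_{h_2}$. Comparing right-hand sides, and using that $f$ is an arbitrary positive Borel function, yields $h\,P_t^{(2)}g=P_t^{(3)}(hg)$ as an $h_2(x)\,dx$-almost everywhere identity; that is, (\ref{4350}) holds for a.e.\ $x$.

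To promote this to the pointwise identity (\ref{4350}) for every $x\in H$, I would invoke the regularity hypotheses, and this is the one genuinely delicate point. For $g$ continuous with compact support, $hg$ is again continuous with compact support because $h=h_1/h_2$ is continuous and strictly positive; hence $P_t^{(3)}(hg)$ and $P_t^{(2)}g$ are continuous by the Feller property, and $1/h$ is continuous as well. Two continuous functions that agree a.e.\ (with respect to $dx$, which charges every nonempty open subset of $H$) must coincide everywhere, so (\ref{4350}) holds for all $x$ and all such $g$; a monotone class argument then extends it to arbitrary positive Borel $g$. It is precisely this a.e.-to-everywhere upgrade that forces the continuity and Feller assumptions into the statement.

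Once (\ref{4350}) is available pointwise, excessiveness of $h$ for $(P_t^{(3)})$ follows quickly. Taking $g\equiv1$ gives $P_t^{(3)}h(x)=h(x)\,P_t^{(2)}1(x)$. Since $(P_t^{(2)})$ is sub-Markovian, $P_t^{(2)}1\le1$, whence $P_t^{(3)}h\le h$, so $h$ is supermedian, and consequently $t\mapsto P_t^{(3)}h$ is nonincreasing by the semigroup property. For the limit as $t\downarrow0$, I use that $(P_t^{(2)})$ is the semigroup of a standard process started in $H$: right-continuity of the paths makes the lifetime strictly positive almost surely, so $P_t^{(2)}1(x)\to1$ as $t\downarrow0$. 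Therefore $P_t^{(3)}h(x)=h(x)\,P_t^{(2)}1(x)\uparrow h(x)$, and combined with supermedianity this shows $h$ is excessive; relation (\ref{4350}) then exhibits $(P_t^{(2)})$ as the Doob $h$-transform of $(P_t^{(3)})$ with respect to $h$.

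For the converse I simply run the computation backwards. Assuming (\ref{4350}) and the weak duality of $(P_t^{(1)})$ and $(P_t^{(2)})$ with respect to $h_1\,dx$, I rewrite $\langle g,P_t^{(1)}f\rangle_{h_1}=\langle f,P_t^{(2)}g\rangle_{h_1}$ using $h_1=h\,h_2$ together with $h\,P_t^{(2)}g=P_t^{(3)}(hg)$, obtaining $\langle hg,P_t^{(1)}f\rangle_{h_2}=\langle f,P_t^{(3)}(hg)\rangle_{h_2}$. As $g$ ranges over all positive Borel functions so does $hg$ (here strict positivity of $h$ is essential), and substituting an arbitrary positive Borel $\widetilde g$ for $hg$ gives exactly $\langle\widetilde g,P_t^{(1)}f\rangle_{h_2}=\langle f,P_t^{(3)}\widetilde g\rangle_{h_2}$, which is the asserted weak duality (\ref{7434}) of $(P_t^{(1)})$ and $(P_t^{(3)})$ with respect to $h_2\,dx$.
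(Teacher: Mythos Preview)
Your proof is correct and follows essentially the same line as the paper's: both substitute $g\mapsto (h_1/h_2)g$ in the second duality relation, match against the first, and deduce (\ref{4350}) almost everywhere before upgrading via the Feller property and a monotone-class extension. You supply more detail than the paper does, in particular an explicit verification that $h$ is excessive for $(P_t^{(3)})$, which the paper asserts in the statement but does not argue in the proof.
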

\begin{proof} From the assumption, for all positive Borel functions $f$ and $g$, 
\begin{eqnarray}
&&\int_{H} P_t^{(1)}f(x) g(x)\,h_1(x)dx=\int_{H}f(x)P_t^{(2)}g(x)\,h_1(x)dx\label{1958}\,,\\
&&\int_{H} P_t^{(1)}f(x) g(x)\,h_2(x)dx=\int_{H}f(x)P_t^{(3)}g(x)\,h_2(x)dx\nonumber\,.
\end{eqnarray}
Replacing $g$ by $\frac{h_1}{h_2}g$ in both members of the second equality gives for all $f$,
\begin{equation}\label{9562}
\int_{H} P_t^{(1)}f(x) g(x)\,h_1(x)dx=\int_{H}f(x)\frac{h_2}{h_1}(x)P_t^{(3)}\frac{h_1}{h_2}g(x)\,h_1(x)dx\,.
\end{equation}
Identifying the second members of (\ref{1958}) and (\ref{9562}), yields (\ref{4350}) in the case where $g$ and $\frac{h_1}{h_2}g$
are bounded and continuous, thanks to the Feller property. Then we extend (\ref{4350}) to all positive Borel functions from 
classical arguments.

The converse is proved in the same way.
\end{proof}

Then as an application of this lemma and Theorem \ref{5372} we can recover the results recalled in the introduction, that
is Theorem 1 in \cite{bz} and Theorem 4 in \cite{ky}.

\begin{corollary}\label{1511}
Let $\{X,\p_x\}$ be a $d$-dimensional stable L\'evy process, with index $\alpha\in(0,2]$, which is absorbed at its first hitting time 
of $0$ $($absorbtion actually holds with probability $1$ if $d=1$ and $\alpha>1$, and with probability $0$ in the other cases$)$.
Recall the definition of the process $\{\widehat{X},\widehat{\p}_x\}$, from Theorem $\ref{5372}$.
\begin{itemize}
\item[$1.$]
If $d=1$ and if $\{X,\p_x\}$ is not spectrally one sided, then the process $\{\widehat{X},\widehat{\p}_x\}$ is an $h$-process of  
$\{-X,\p_{-x}\}$ with respect to the function $x\mapsto\pi(x/|x|)|x|^{\alpha-1}$, where $(\pi(-1),\pi(+1))$ is the invariant measure of the 
first coordinate of the MAP associated to $\{X,\p_x\}$.
\item[$2.$] If $d>1$ and $\{X,\p_x\}$ is isotropic, then the process $\{\widehat{X},\widehat{\p}_x\}$ is an $h$-process of  $(X,\p_{x})$ 
with respect to the function $x\mapsto\|x\|^{\alpha-d}$.
\end{itemize}
\end{corollary}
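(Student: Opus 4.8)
The plan is to obtain both statements from Lemma \ref{2789} by feeding it two weak dualities of the absorbed stable process $\{X,\p_x\}$ on $H=\R^d\setminus\{0\}$. The first is the duality of $\{X,\p_x\}$ with $\{\widehat{X},\widehat{\p}_x\}$ with respect to $\nu_\pi^\alpha(dx)=\pi(x/\|x\|)\|x\|^{\alpha-d}\,dx$, which Theorem \ref{5372} delivers \emph{provided} the MAP $\{(\theta,\xi),P_{y,z}\}$ associated with $X$ is reversible with respect to $\pi(ds)\,dx$; this plays the role of $(P^{(1)},P^{(2)})$ with $h_1(x)=\pi(x/\|x\|)\|x\|^{\alpha-d}$. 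The second is the classical L\'evy duality of $\{X,\p_x\}$ with $\{-X,\p_{-x}\}$ with respect to Lebesgue measure, which persists for the processes absorbed at the Lebesgue-null point $0$; this is $(P^{(1)},P^{(3)})$ with $h_2\equiv1$. Lemma \ref{2789} then exhibits $\widehat{X}$ as the $h$-process of $\{-X,\p_{-x}\}$ with $h=h_1/h_2=\pi(x/\|x\|)\|x\|^{\alpha-d}$, which is the assertion once the correct $\pi$ is identified in each case.

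For part 2 the two input dualities are immediate. Isotropy of $X$ forces the associated MAP to be isotropic, so part 3 of Proposition \ref{3419} gives reversibility with $\pi$ the uniform measure on $S_{d-1}$, whence $h_1(x)=\|x\|^{\alpha-d}$ up to a constant and Theorem \ref{5372} yields the first duality. Isotropy also gives $-X\ed X$ (since $-I\in O(d)$), so the L\'evy duality with $-X$ becomes \emph{self}-duality of $X$ with respect to Lebesgue measure, i.e. $P^{(3)}=P^{(1)}$. Lemma \ref{2789} then returns precisely the $h$-transform of $\{X,\p_x\}$ with $h(x)=\|x\|^{\alpha-d}$.

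The real work is in part 1, where $X$ is no longer self-dual and the only thing to establish is reversibility of the MAP. Here $S=\{-1,+1\}$ and $\theta=\mathrm{sgn}(X)$ is a two-state continuous-time chain, hence automatically reversible, with reversing (= invariant) measure $(\pi(-1),\pi(+1))$; this is the measure named in the statement. By part 1 of Proposition \ref{3419} it then suffices to check $\Delta_{+1,-1}\ed\Delta_{-1,+1}$, i.e. that the jumps performed by $\xi=\log|X|$ at an up-crossing and at a down-crossing of $0$ have the same law. Since $X$ is not spectrally one-sided, both one-sided jump intensities $c_\pm$ are positive and (for $\alpha\in(0,2)$) sign changes occur only through jumps across $0$, so these kernels can be read off from $\nu(dy)=(c_+\ind_{\{y>0\}}+c_-\ind_{\{y<0\}})|y|^{-1-\alpha}\,dy$. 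The key point is that, because the MAP clock is $dA_t=\|X\|^\alpha\,dt$, the factor $\|X\|^{-\alpha}$ coming from the scaling of $\nu$ at the pre-jump level cancels the factor $\|X\|^{\alpha}$ from the time change; evaluating at modulus one, a $+1\to-1$ transition with log-jump $\delta$ then has intensity $c_-(1+e^\delta)^{-1-\alpha}e^\delta\,d\delta$ and a $-1\to+1$ transition the intensity $c_+(1+e^\delta)^{-1-\alpha}e^\delta\,d\delta$. These have identical shape, so the total rates are $q_{+1,-1}=c_-/\alpha$ and $q_{-1,+1}=c_+/\alpha$ while the conditional jump laws both equal $\alpha(1+e^\delta)^{-1-\alpha}e^\delta\,d\delta$; hence $\Delta_{+1,-1}\ed\Delta_{-1,+1}$, and incidentally $(\pi(-1),\pi(+1))\propto(c_-,c_+)$ as detailed balance requires. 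Proposition \ref{3419} now gives reversibility and Theorem \ref{5372} the first duality with $h_1(x)=\pi(x/|x|)|x|^{\alpha-1}$; the symmetric (in particular Gaussian, $\alpha=2$) case is covered by the $O(1)$-isotropic version of part 2.

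It remains only to verify the Feller hypotheses of Lemma \ref{2789}: $P^{(3)}$ is the (killed) stable process on the open set $H$, which is Feller by standard facts, and $P^{(2)}=\widehat{X}$ is Feller because it is the time change of $X$ by a continuous strictly increasing additive functional composed with the inversion $x\mapsto x/\|x\|^2$, a homeomorphism of $H$ that swaps the two ends $0$ and $\infty$ and so preserves $C_0(H)$. I expect the main obstacle to be the reversibility step of part 1 --- cleanly isolating the two crossing kernels and exhibiting the scaling cancellation that makes them proportional through $c_\pm$ alone --- rather than these Feller verifications, which are routine.
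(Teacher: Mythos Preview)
Your proposal is correct and follows essentially the same route as the paper: establish reversibility of the MAP via Proposition~\ref{3419}, invoke Theorem~\ref{5372} for the duality with respect to $\nu_\pi^\alpha$, pair this with the classical L\'evy duality of $X$ with $-X$ with respect to Lebesgue measure, and feed both into Lemma~\ref{2789} after checking the Feller property. The only noteworthy difference is that for the key fact $\Delta_{+1,-1}\ed\Delta_{-1,+1}$ in part~1 the paper simply cites Corollary~11 of \cite{cpr}, whereas you compute the crossing kernels directly from the stable L\'evy measure and the $\|X\|^\alpha$ time change (your computation is correct, and your identification $(\pi(-1),\pi(+1))\propto(c_-,c_+)$ matches the explicit formula quoted from \cite{cpr} and \cite{ky}); this makes your argument more self-contained but is not a different strategy.
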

\begin{proof}  When $d=1$ and $\{X,\p_x\}$ is not spectrally one sided, we derive from Corollary 11, Section 4.1 in \cite{cpr}, that the 
MAP associated to the stable L\'evy process $\{X,\p_x\}$ satisfies $\Delta_{ij}\ed\Delta_{ji}$, $i,j\in\{-1,+1\}$, with the notation introduced 
in (\ref{7293}). Moreover, since the continuous time Markov chain $\{\theta,P_{y,z}\}$ is irreducible and takes only two values, it is 
reversible with respect to some measure $(\pi(-1),\pi(+1))$. Then we derive from part 1.~of Proposition \ref{3419} that the MAP 
$\{(\theta,\xi),P_{y,z}\}$  is reversible. Hence we can apply Theorem \ref{5372} which ensures that $\{X,\p_x\}$ and 
$\{\widehat{X},\widehat{\p}_x\}$ are in duality with respect to the function $x\mapsto\pi(x/|x|)|x|^{\alpha-1}$. 

If $d\ge1$ and the process is isotropic, then again we derive from Proposition \ref{3419} and Theorem \ref{5372} that  $\{X,\p_x\}$ 
and $\{\widehat{X},\widehat{\p}_x\}$ are in duality with respect to the function $x\mapsto\|x\|^{\alpha-d}$. 

Finally it is well known that stable L\'evy processes satisfy the Feller property. Then from the path construction in Theorem \ref{5372}
and homogeneity of the increments of $\{X,\p_x\}$, we derive that the process $\{\widehat{X},\widehat{\p}_x\}$ is itself a Feller process 
on $\mathbb{R}^d\setminus\{0\}$. Moreover it is well known  that $\{X,\p_x\}$ and $\{-X,\p_{-x}\}$ are in duality with respect to the 
Lebesgue measure (when $d=1$ and $\alpha>1$, this duality is inherited from the duality between the non absorbed L\'evy processes 
with respect to the Lebesgue measure on $\mathbb{R}$).  It remains to apply Lemma \ref{2789} in order to conclude in both cases 
$d=1$ and $d>1$. 
\end{proof}
\noindent Note that in the case $d=1$, the probability measure $(\pi(-1),\pi(+1))$ has been made explicit in Corollary 11 of \cite{cpr} 
and in Theorem 4 of \cite{ky}. Besides, the $h$-process involved in part 1 of Corollary \ref{1511} has been extensively investigated in
\cite{ya} and \cite{pa}, where for $\alpha>1$, it is identified as the real L\'evy process $\{-X,\p_{-x}\}$ conditioned to avoid 0.\\

Theorem \ref{5372} and Lemma \ref{2789} can be applied in the same way as in Corollary \ref{1511}, to any reversible ssMp 
$\{X,\p_x\}$ provided $\{\widehat{X},\widehat{\p}_x\}$ has the Feller property on $H$ and $(X,\p_x)$ is in duality with some other 
Feller process on $H$. Let us give three examples.\\

\noindent {\bf A.~Conditioned L\'evy processes}: Let $\{X,\p_x\}$ be a one dimensional stable L\'evy process, with index $\alpha\in(0,2)$ 
and let us denote by $X^0$ the process $X$ which is absorbed at 0 when it first hits the negative halfline, that is 
\[X^0_t=X_t\ind_{\{t<\tau^-\}}\,,\;\;\;t\ge0\,,\] 
where $\tau^-=\inf\{t:X_t<0\}$. It is well known, see \cite{cc} and the references therein, that the functions $h_1(x)=x^{\alpha(1-\rho)}$ 
and $h_2(x)=x^{\alpha(1-\rho)-1}$, where $\rho=\p_0(X_1>0)$, are respectively invariant and excessive for the process $(X^0,\p_x)$. 
We denote the Doob $h$-transforms of $(X^0,\p_x)$ associated to $h_1$ and $h_2$, respectively by 
\[\{X^\uparrow,\p_x^\uparrow\}\;\;\;\mbox{and}\;\;\;\{X^{\searrow},\p_x^\searrow\}\,.\]
The process $\{X^\uparrow,\p_x^\uparrow\}$ is known as the L\'evy process $\{X,\p_x\}$ conditioned to stay positive. It is a positive 
ssMp with index $\alpha$, which satisfies $\lim_{t\rightarrow\infty}X_t=+\infty$, $\p_x$-a.s., for all $x>0$. In particular this process 
never hits 0. In the case of Brownian motion, that is for $\alpha=2$, it corresponds to the three dimensional Bessel process. The 
process $\{X^{\searrow},\p_x^\searrow\}$ is called the process $\{X,\p_x\}$ conditioned to hit 0 continuously. It is also a positive ssMp 
with index $\alpha$. Its absorbtion time $\zeta_c$ is finite and satisfies $X_{\zeta_c-}=0$, $\p_x^\searrow$-a.s. for all $x>0$.  Note
that in the present case, $H=(0,\infty)$. 

Then let us set $Y:=-X$ and denote by ${\rm P}_x$, $x\in\mathbb{R}$, the family of probability measures associated to $Y$. 
We  denote by $\{Y^\uparrow,{\rm P}_x^\uparrow\}$ and $\{Y^{\searrow},{\rm P}_x^\searrow\}$ the process $\{Y,{\rm P}_x\}$ 
conditioned to stay positive and conditioned to hit 0 continuously, respectively. Since $\{Y,{\rm P}_x\}$ is a stable L\'evy process, these
processes enjoy the same properties as $\{X^\uparrow,\p_x^\uparrow\}$ and $\{X^{\searrow},\p_x^\searrow\}$. 

The process $\{X,\p_x\}$ conditioned to stay positive is related to the process $\{Y,{\rm P}_x\}$ conditioned to hit 0 continuously 
as follows. 

\begin{corollary}\label{4568} The process $\{X^\uparrow,\p_x^\uparrow\}$ can by obtained from the paths of the process 
$\{Y^{\searrow},{\rm P}_x^\searrow\}$, through the following transformation$:$
\[\{X^\uparrow,\p_x^\uparrow\}=\{(1/Y_{\gamma_t}^{\searrow})_{t\ge0},{\rm P}_{1/x}^\searrow\}\,,\]
where the time change $\gamma_t$ is defined by 
\[\gamma_t=\inf\left\{s:\int_0^s\frac{du}{(Y^\searrow_u)^{2\alpha}}>t\right\}\,,\;\;\;t\ge0\,.\]
\end{corollary}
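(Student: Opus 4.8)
The plan is to realise the right-hand side as the time-changed inversion of the positive ssMp $\{Y^\searrow,{\rm P}_x^\searrow\}$ in the sense of Theorem \ref{5372}, and then to identify $\{X^\uparrow,\p_x^\uparrow\}$ with this inversion by exhibiting both as the weak dual of $\{Y^\searrow,{\rm P}_x^\searrow\}$ with respect to one and the same measure. First I would note that since $Y^\searrow$ takes values in $H=(0,\infty)$ we have $\|Y^\searrow\|=Y^\searrow$, so the process $\widehat{Y^\searrow}$ built from $\{Y^\searrow,{\rm P}_x^\searrow\}$ as in (\ref{9825}) is exactly $\widehat{Y^\searrow}_t=1/Y^\searrow_{\gamma_t}$, with starting measure $\widehat{{\rm P}}_x^\searrow={\rm P}_{1/x}^\searrow$ and with the time change $\gamma_t$ of the statement. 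Thus the right-hand side of the asserted identity is precisely $\{\widehat{Y^\searrow},\widehat{{\rm P}}_x^\searrow\}$. By the remark following the proof of Theorem \ref{5372}, every positive ssMp is in weak duality with the positive ssMp obtained from it by negating the underlying L\'evy process, which by Theorem \ref{5372} is exactly its inversion; applied to $Y^\searrow$ this gives that $\{Y^\searrow,{\rm P}_x^\searrow\}$ and $\{\widehat{Y^\searrow},\widehat{{\rm P}}_x^\searrow\}$ are in weak duality with respect to $\nu_\pi^\alpha(dx)=x^{\alpha-1}\,dx$ on $(0,\infty)$.

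The main step is to show that $\{X^\uparrow,\p_x^\uparrow\}$ is likewise in weak duality with $\{Y^\searrow,{\rm P}_x^\searrow\}$ with respect to $x^{\alpha-1}\,dx$. I would start from the classical half-line duality: since the stable process $\{X,\p_x\}$ and its dual $\{Y,{\rm P}_x\}=\{-X,\p_{-x}\}$ are in duality with respect to Lebesgue measure on $\mathbb{R}$, killing both on first entry into $(-\infty,0)$ shows that $X^0$ and $Y^0$ are in weak duality with respect to $dx$ on $(0,\infty)$; in terms of transition densities this means they are related by spatial transposition. Recalling that $X^\uparrow$ is the Doob transform of $X^0$ by the invariant function $h_1(x)=x^{\alpha(1-\rho)}$, while $Y^\searrow$ is the Doob transform of $Y^0$ by the excessive function $\hat h_2(x)=x^{\alpha(1-\hat\rho)-1}=x^{\alpha\rho-1}$ (where $\hat\rho={\rm P}_0(Y_1>0)=1-\rho$), a direct computation with the transformed densities shows that $X^\uparrow$ and $Y^\searrow$ are in weak duality with respect to $w(x)\,dx$ for $w=h_1\hat h_2$. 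The point is the cancellation of the $\rho$-dependence,
\[
w(x)=h_1(x)\hat h_2(x)=x^{\alpha(1-\rho)}\,x^{\alpha\rho-1}=x^{\alpha-1},
\]
so this duality holds precisely with respect to $x^{\alpha-1}\,dx$. This is the special case, implicit in Lemma \ref{2789}, of the general fact that Doob transforms of a pair of dual processes are again dual, with respect to the product of the two transform functions times the original reference measure.

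It then remains to conclude by uniqueness of the weak dual. Both $\{X^\uparrow,\p_x^\uparrow\}$ and $\{\widehat{Y^\searrow},\widehat{{\rm P}}_x^\searrow\}$ are in weak duality with $\{Y^\searrow,{\rm P}_x^\searrow\}$ with respect to $x^{\alpha-1}\,dx$, so comparing the two duality relations gives $\int g\,P_t^\uparrow f\,x^{\alpha-1}\,dx=\int g\,\widehat{P}_t^\searrow f\,x^{\alpha-1}\,dx$ for all positive Borel $f,g$, whence the two semigroups agree $x^{\alpha-1}\,dx$-almost everywhere. Since $x^{\alpha-1}\,dx$ has full support on $(0,\infty)$ and both processes are Feller there (as for $\widehat X$ in Corollary \ref{1511}, the inverted process inherits the Feller property, and conditioned stable processes are Feller), this equality holds everywhere, so $\{X^\uparrow,\p_x^\uparrow\}$ and $\{\widehat{Y^\searrow},\widehat{{\rm P}}_x^\searrow\}$ have the same law, which is the claimed path-transformation identity. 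I expect the delicate points to be the rigorous passage from the half-line duality of $X^0,Y^0$ to the duality of their Doob transforms (where existence and smoothness of the killed stable densities is used) and the Feller justification needed to upgrade the almost-everywhere identification of the co-semigroups to a genuine equality in law.
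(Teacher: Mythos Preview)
Your proof is correct and follows essentially the same approach as the paper: apply Theorem~\ref{5372} to get duality of $Y^\searrow$ with its inversion with respect to $x^{\alpha-1}\,dx$, establish the same duality between $Y^\searrow$ and $X^\uparrow$ via the $h$-transform computation starting from the half-line duality of $X^0$ and $Y^0$, and conclude by Feller uniqueness. The only cosmetic differences are that the paper cites Lamperti's Theorem~2.1 in \cite{la} for the Feller property of positive ssMp's on $(0,\infty)$ (cleaner than your appeal to Corollary~\ref{1511}), and frames the final identification as an application of Lemma~\ref{2789} with $h_1=h_2=x^{\alpha-1}$, hence $h\equiv 1$, rather than as uniqueness of the weak dual.
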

\begin{proof} As a positive ssMp, the process $\{Y^{\searrow},{\rm P}_x^\searrow\}$ satisfies the skew product property. 
Therefore it is reversible and from Theorem \ref{5372}, $\{Y^{\searrow},{\rm P}_x^\searrow\}$ and 
$\{(1/Y_{\gamma_t}^{\searrow})_{t\ge0},{\rm P}_{1/x}^\searrow\}$ are in duality with respect to the measure $x^{\alpha-1}\,dx$.
Moreover, $\{Y^{\searrow},{\rm P}_x^\searrow\}$ and $\{X^\uparrow,\p_x^\uparrow\}$ are also in duality with respect to the measure 
$x^{\alpha-1}\,dx$. Indeed, let us denote by $Y^0$ the process $Y$ absorbed at 0 when it first hits the negative halfline. Then 
$\{X^0,\p_x\}$ and $\{Y^0,{\rm P}_x\}$ are in duality with respect to the Lebesgue measure on $(0,\infty)$ (this duality is inherited 
from the duality between the L\'evy processes $\{X,\p_x\}$ and $\{Y,{\rm P}_x\}$ with respect to the Lebesgue measure on 
$\mathbb{R}$). Therefore, for any positive Borel functions $f$ and $g$,
\begin{eqnarray*}
\int_0^\infty g(x){\rm E}_x^\searrow(f(Y^\searrow_t))\,x^{\alpha-1}\,dx&=&\int_0^\infty g(x){\rm E}_x(Y_t^{\alpha\rho-1}f(Y_t),
t<\tau_Y^-)\,x^{\alpha(1-\rho)}\,dx\\
&=&\int_0^\infty f(x)\e_x(X_t^{\alpha(1-\rho)}g(X_t),t<\tau^-)\,x^{\alpha\rho-1}\,dx\\
&=&\int_0^\infty f(x)\e_x^\uparrow(g(X_t^\uparrow))\,x^{\alpha-1}\,dx\,,
\end{eqnarray*}
where $\tau_Y^-=\inf\{t:Y_t<0\}$ in right hand side of the above equality. 

Then as positive ssMp's, $\{X^\uparrow,\p_x^\uparrow\}$ and $\{(1/Y_{\gamma_t}^{\searrow})_{t\ge0},{\rm P}_{1/x}^\searrow\}$ 
satisfy the Feller property on $(0,\infty)$, see Theorem 2.1 in \cite{la}. Therefore applying Lemma \ref{2789}, we conclude that
$\{X^\uparrow,\p_x^\uparrow\}$ is an $h$ process of the process $\{(1/Y_{\gamma_t}^{\searrow})_{t\ge0},{\rm P}_{1/x}^\searrow\}$ 
with respect to the function which is identically equal to 1, so that both processes are equal.
\end{proof}
\noindent It is straightforward that the same relationship exists between the processes $\{Y^\uparrow,{\rm P}_x^\uparrow\}$ and
$\{X^{\searrow},\p_x^\searrow\}$. Let us also note that when $\alpha>1$ and $\{X,\p_x\}$ has no positive jumps, then 
$\{Y^{\searrow},{\rm P}_x^\searrow\}=\{Y^0,{\rm P}_x\}$. Therefore Corollary \ref{4568} and its proof allow us to complete 
part 1 of Corollary \ref{1511}, where completely asymmetric L\'evy processes are excluded, by stating that 
$\{\widehat{X},\widehat{\p}_x\}$ is an $h$-process of $\{-X,\p_{-x}\}$ with respect to the function $x\mapsto x^{\alpha-1}$.
We emphasize that here we consider $\{\widehat{X},\widehat{\p}_x\}$ and $\{-X,\p_{-x}\}$ as positive ssMp's, that is $H=(0,\infty)$. 
Then in this case, $\{\widehat{X},\widehat{\p}_x\}$ corresponds to the process  $\{-X,\p_{-x}\}$ conditioned to 
stay positive. This remark also allows us to recover Theorem \ref{6214}. \\

\noindent {\bf B.~Free $d$-dimensional Bessel processes}: Let $X=\{(X_1(t),X_2(t),..,X_d(t)),t\ge0\}$, where $X_i(t)$ are independent
BES$(\delta)$ processes of dimension $\delta>0$ and let us consider the $H$-valued ssMp $\{X,\p_x\}$ absorbed at 0, where
$H=(0,\infty)^d$. It is well known that for each $i=1,\dots,d$, $X_i$ is in duality with itself with respect to its speed measure
$m_i(dx_i)= x_i^{1+2\nu}/|\nu|dx_i$ when $\nu\neq 0$ and $m_i(dx_i)= x_idx_i$ when $\nu=0$, where $\nu=\frac{\delta}{2}-1$.
This entails that $\{X,\p_x\}$ is in duality with itself, with respect to the measure $D(x)\,dx$, where
\begin{equation}\label{product}
D(x)=\prod_{i=1}^dx_i^{\delta-1} =||x||^{d\delta-d} \prod_{i=1}^d \left(\frac{x_i}{||x||}\right)^{\delta-1}\,,\;\;\;
x\in H\,.
\end{equation}
Let us observe that the process $\{X,\p_x\}$ satisfies the skew product property.  Indeed we can argue similarly as for the skew
product decomposition of Brownian motion, see e.g. Chapter 7.15 in \cite{IMcK}. The generator of $\{X,\p_x\}$ is equal to
$L=\sum_{i=1}^d L_{x_i}$, where   $L_{x_i}=\frac12\frac{\partial^2}{\partial x_i^2}+\frac12\frac{\delta-1}{x_i}\frac{\partial}{\partial x_i}$.
Then we compute the spherical decomposition of $L$. Set $u=u(r,\sigma)$, where $r=\|x\|$ and $\sigma=\frac{x}{\|x\|}$
are the spherical coordinates in $\mathbb{R}^d$ and let $\Delta_\sigma$ be the Laplacian on the unit sphere $S_{d-1}$.
By the well-known formula $\Delta =\frac12\frac{\partial^2 }{\partial r^2} + \frac12\frac{ d -1}{r}\frac{ \partial }{\partial r}+\frac{1}{r^2}
\Delta_{\sigma} $  and by the chain rule, we obtain
$$L u = \frac12\frac{\partial^2 u}{\partial r^2} + \frac12\frac{\delta d -1}{r}\frac{ \partial u}{\partial r}+\frac{1}{r^2}
\left[ \Delta_{\sigma} u +\frac{\delta -1}{2} (\sigma^{-1}-d \sigma) \cdot \nabla_\sigma u \right],$$
where $(\sigma^{-1})_i=\sigma^{-1}_i$.
We notice that $L_r:=\frac12\frac{\partial^2 u}{\partial r^2} + \frac12 \frac{\delta d -1}{r}\frac{ \partial u}{\partial r}$
is the generator of $\|X\|\sim$BES$(\delta d)$.\\

We deduce from Propositions \ref{3419} and \ref{9543} that the free Bessel process $\{X,\p_x\}$ is reversible, with respect to
the measure with density $\pi(y)= \prod_{i=1}^d y_i^{\delta-1},\ \ y\in S_{d-1}\cap H$. Indeed, it satisfies the skew product property and 
it is self dual with respect to the measure $D(x)dx$ which splits as the product of an angular part and a radial part, see \eqref{product}. 
Therefore Lemma \ref{2789} and Theorem \ref{5372} can be applied to the free Bessel process $\{X,\p_x\}$ and we obtain the 
following result.

\begin{corollary}\label{1698}
Let $\{X,\p_x\}$ be a free Bessel process with values in $(0,\infty)^d$ and absorbed at $0$. Recall the definition of the process 
$\{\widehat{X},\widehat{\p}_x\}$, from Theorem $\ref{5372}$. Then the processes $\{X,\px\}$ and $\{\widehat{X},\widehat{\p}\}$ are 
in duality with respect to the measure with density
$$\prod_{i=1}^d \left(\frac{x_i}{\|x\|}\right)^{\delta-1} \|x\|^{2-d}.$$
Moreover, the process $\{\widehat{X},\widehat{\p}_x\}$ is a Doob $h$-transform of $\{X,\px\}$ with respect to the excessive
function $h(x)= \|x\|^{2-d\delta}$.
\end{corollary}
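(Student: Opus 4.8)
The plan is to combine Theorem \ref{5372} with Lemma \ref{2789}, exactly as announced just before the statement. First I would record that the free Bessel process has self-similarity index $\alpha=2$, since each coordinate BES$(\delta)$ scales as $aR_{a^{-2}t}$. Next I would recall from the discussion preceding the corollary that $\{X,\p_x\}$ is reversible with respect to $\pi(ds)\,dx$, where $\pi(y)=\prod_{i=1}^d y_i^{\delta-1}$ on $S_{d-1}\cap H$; this follows from Propositions \ref{3419} and \ref{9543}, using that $\{X,\p_x\}$ satisfies the skew product property and is self-dual with respect to $D(x)\,dx=\pi(x/\|x\|)\|x\|^{d\delta-d}\,dx$, a density which splits into angular and radial parts.

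For the first assertion I would simply invoke Theorem \ref{5372}: reversibility of the underlying MAP $\{(\theta,\xi),P_{y,z}\}$ is precisely the condition under which $\{X,\p_x\}$ and $\{\widehat X,\widehat{\p}_x\}$ are in duality with respect to $\nu_\pi^\alpha(dx)$. Substituting $\alpha=2$ and the above $\pi$ into the explicit formula (\ref{3072}) yields $\nu_\pi^2(dx)=\prod_{i=1}^d(x_i/\|x\|)^{\delta-1}\|x\|^{2-d}\,dx$, which is the stated duality measure.

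For the $h$-transform I would apply Lemma \ref{2789} with $(P^{(1)}_t)=(P^{(3)}_t)$ equal to the semigroup of $\{X,\p_x\}$ and $(P^{(2)}_t)$ the semigroup of $\{\widehat X,\widehat{\p}_x\}$. Self-duality of $X$ says that $(P^{(1)}_t)$ and $(P^{(3)}_t)$ are in weak duality with respect to $h_2(x)\,dx$ where $h_2=D$, while the first assertion, together with the symmetry of weak duality, says that $(P^{(1)}_t)$ and $(P^{(2)}_t)$ are in weak duality with respect to $h_1(x)\,dx$ where $h_1$ is the density of $\nu_\pi^2$. Both $h_1$ and $h_2$ are positive and continuous on $(0,\infty)^d$, and a direct computation gives
\[
\frac{h_1}{h_2}(x)=\frac{\prod_{i=1}^d(x_i/\|x\|)^{\delta-1}\,\|x\|^{2-d}}{\|x\|^{d\delta-d}\prod_{i=1}^d(x_i/\|x\|)^{\delta-1}}=\|x\|^{2-d\delta}=:h(x).
\]
Lemma \ref{2789} then delivers both conclusions at once: $h(x)=\|x\|^{2-d\delta}$ is excessive for $\{X,\p_x\}$, and $\{\widehat X,\widehat{\p}_x\}$ is the $h$-process of $\{X,\p_x\}$.

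The point that requires genuine work is the Feller hypothesis in Lemma \ref{2789}, which demands that both $\{X,\p_x\}$ and $\{\widehat X,\widehat{\p}_x\}$ be Feller on $(0,\infty)^d$. For $X$ this is immediate, being a product of Feller BES$(\delta)$ processes. For $\widehat X$ I expect this to be the main obstacle. The approach I would take is to observe that the skew product property passes from $\{(\theta,\xi),P_{y,z}\}$ to its dual MAP $\{(\theta,-\xi),P_{y,-z}\}$: since $\theta$ and $\xi$ are independent, so are $\theta$ and $-\xi$. Hence by Theorem \ref{5372} the process $\widehat X$ is again a skew product ssMp whose radial part $\|\widehat X\|$ is the positive ssMp attached to the Lévy process $-\xi$ (Feller by Theorem 2.1 in \cite{la}, recalling that $\|X\|$ is BES$(d\delta)$) and whose angular part is the same spherical diffusion as that of $X$. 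The Feller property of $\widehat X$ should then follow from that of its radial and angular components and their independence in Lamperti coordinates, and this is where the bulk of the technical care must be spent.
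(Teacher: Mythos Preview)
Your proposal is correct and follows exactly the route the paper takes: the proof in the paper is the single sentence preceding the corollary, which says that reversibility (from Propositions~\ref{3419} and~\ref{9543}) allows one to apply Theorem~\ref{5372} and then Lemma~\ref{2789}. You have spelled out precisely this argument, including the computation of $h_1/h_2=\|x\|^{2-d\delta}$. If anything, you are more careful than the paper, which does not explicitly verify the Feller hypothesis of Lemma~\ref{2789} for $\widehat X$ in this example; your sketch via the skew-product structure of the dual MAP is a reasonable way to fill that gap.
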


\begin{rem}
It is easy to check that for $x\in H$ one has $Lh=0$, i.e. $h$ is $L$-harmonic on its domain. However, $h$
is not $X$-invariant when $d\delta>2$, i.e. $(||X_t||^{2-d\delta}, t\geq 0)$ is a strict local martingale. See for instance the
discussion on p.~$330$ of \cite{Elworthy-Li-Yor}.
\end{rem}
\begin{rem} For $\delta>2$, we can give the following realization of the MAP corresponding to the free Bessel process
$\{X,\p_x\}$. There exists a $d$-dimensional Brownian motion $(W^{(1)}, W^{(2)}, \cdots, W^{(d)})$ such that
$$\xi_t=\sum_{j=1}^d\int_0^t \theta^{(j)}_s dW^{(j)}_s +\left(\frac{d\delta}{2}-1\right)t$$ and $(\theta_t)$ satisfies the SDE system
\begin{eqnarray*}
d\theta^{(i)}_t=dW^{(i)}_t-\theta^{(i)}_t\sum_{j=1}^d \theta_t^{(j)} dW^{(j)}_t+ \left(\frac{\delta-1}{2}\frac{1}{\theta_t^{(i)}}-\frac{d\delta-1}{2}\theta_t^{(i)} \right)dt,
\quad i=1,2,\ldots, d.
\end{eqnarray*}
The processes $(\xi_t)$ and $(\theta_t)$ are independent. The proof of this MAP representation relies on the SDE representations 
of the processes $X_i(t)$.\\
\end{rem}

\noindent {\bf C.~Dunkl processes}: In what follows, we recall and use  some properties of the Dunkl processes which may be found
in Chapters 2 and 3 of \cite{CGYor}.

Let $R$ be a finite root system on $\mathbb{R}^d$ and let $R^+$ be a positive subsystem of $R$. Let also $k$ be a non-negative
function on $R$, called {\it multiplicity function}. The generator of the Dunkl process $\{X,\p_x\}$ is $L_k:=\frac12 \Delta_k$ where $\Delta_k=\sum_{i=1}^d T_i^2$ is the Dunkl Laplacian  and $T_if(x):= \partial_i f(x) + 
\sum_{ \alpha \in R^+} k(\alpha) \alpha_i \frac{f(x)-f(\sigma_\alpha x)}{\alpha\cdot x}$, $i=1,2, \cdots, d$, are  the Dunkl derivatives.

Here $\sigma_\alpha$ are the symmetries with respect to the hyperplanes $\{\alpha=0\}$. Dunkl processes are ssMp's with
index $2$. In Dunkl  analysis, an important role is played by the so-called Dunkl weight function $\omega_k(x)=
\prod_{ \alpha \in R} |\alpha\cdot x|^{k(\alpha)} $ and the constant $\gamma=\gamma(k)=\sum_{ \alpha \in R^+} k(\alpha).$
We see that $\omega_k(x)$ is  homogeneous  of order $2\gamma$.

Let us also mention that Dunkl processes have the skew product property: this fact was proved by Chybiryakov in \cite{CGYor}, see
Theorem 8, p.156 therein. Besides, the radial part $R_t=\|X_t\|$ is a BES$(d+2\gamma) $ process.

We also observe that the Dunkl process $\{X,\p_x\}$ is self-dual  with respect to the measure $M(dx)= \omega_k(x)\,dx$. This follows 
from the formula for the Dunkl  transition function, see (23) p.120 in \cite{CGYor},
\begin{equation}\label{DunklHEAT}
p^{(k)}_t(x,y)=\frac{1}{c_k t^{\gamma+d/2}} \exp\left(-\frac{\|x\|^2+\|y\|^2}{2t}\right)D_k\left(\frac{x}{\sqrt{t}},\frac{y}{\sqrt{t}}\right)
\omega_k(y)\,,
\end{equation}
where $D_k$ is the Dunkl kernel.  The only non-symmetric factor in \eqref{DunklHEAT} is $\omega_k(y)$; hence  the kernel
$p^{(k)}_t(x,y)  \omega_k(x)$ is symmetric in $x$ and $y$.

The density of the self-duality measure $M$ factorizes as
$$  \omega_k(x)=  \omega_k(x/\|x\|) \|x\|^{2\gamma}. $$
By Proposition \ref{9543},  the Dunkl process $\{X,\p_x\}$ is reversible with respect to the measure $\pi(y)=\omega_k(y),\ y\in S_{d-1}$.

Note that contrary to the case of Brownian motion, the Dunkl process $\{X,\p_x\}$ with $k\not=0$ is non-isotropic.
Indeed, the process $\{X,\p_x\}$ always jumps from a state $y$ to a symmetric state $\sigma_\alpha(y)$. Thus, like free Bessel 
processes, Dunkl processes are a class of reversible non-isotropic self-similar processes. Then we derive the next corollary as a
consequence of Theorem \ref{5372}.

\begin{corollary}\label{2352}
Let $\{X,\p_x\}$ be a Dunkl process in $\mathbb{R}^d\setminus\{0\}$ and absorbed at $0$.  Recall the definition of the process
$\{\widehat{X},\widehat{\p}_x\}$, from Theorem $\ref{5372}$. The processes $\{X,\px\}$ and $\{\widehat{X},\widehat{\p}_x\}$ are in 
duality with respect to the measure with density
$$ \omega_k(x/\|x\|)\|x\|^{2-d}.$$
Moreover, the process $\{\widehat{X},\widehat{\p}_x\}$ is a Doob $h$-transform of $\{X,\px\}$ with respect to the excessive
function $h(x)= \|x\|^{2-d-2\gamma}$.
\end{corollary}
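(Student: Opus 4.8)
The plan is to combine two dualities already available for the Dunkl process and then feed them into Lemma \ref{2789}. Recall from the discussion preceding the statement that the Dunkl process $\{X,\px\}$ is a ssMp of index $\alpha=2$, that it enjoys the skew product property, that it is self-dual with respect to $M(dx)=\omega_k(x)\,dx$ with the factorization $\omega_k(x)=\omega_k(x/\|x\|)\|x\|^{2\gamma}$, and that, by Proposition \ref{9543}, its associated MAP is therefore reversible with respect to $\pi(ds)\,dx$ with $\pi(y)=\omega_k(y)$ on $S_{d-1}$. The first thing I would do is simply apply Theorem \ref{5372} with $\alpha=2$ to this reversible MAP, which yields directly that $\{X,\px\}$ and $\{\widehat{X},\widehat{\p}_x\}$ are in duality with respect to
\[
\nu_\pi^2(dx)=\pi(x/\|x\|)\|x\|^{2-d}\,dx=\omega_k(x/\|x\|)\|x\|^{2-d}\,dx,
\]
which is the first assertion of the corollary.

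Next I would invoke Lemma \ref{2789}, taking the Dunkl semigroup itself as the common first process $(P^{(1)}_t)=(P^{(3)}_t)$ and the semigroup of $\{\widehat{X},\widehat{\p}_x\}$ as $(P^{(2)}_t)$. The two hypotheses of the lemma are then exactly the two dualities in hand: $(P^{(1)}_t)$ is in duality with $(P^{(2)}_t)$ with respect to $h_1(x)\,dx$, where $h_1(x)=\omega_k(x/\|x\|)\|x\|^{2-d}$ by the step above, and $(P^{(1)}_t)$ is in duality with $(P^{(3)}_t)$ with respect to $h_2(x)\,dx$, where $h_2(x)=\omega_k(x)=\omega_k(x/\|x\|)\|x\|^{2\gamma}$ by the self-duality of the Dunkl process. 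The lemma then gives that $h:=h_1/h_2$ is excessive for $\{X,\px\}$ and that $\{\widehat{X},\widehat{\p}_x\}$ is its $h$-process. The angular factors cancel, leaving
\[
h(x)=\frac{\omega_k(x/\|x\|)\|x\|^{2-d}}{\omega_k(x/\|x\|)\|x\|^{2\gamma}}=\|x\|^{2-d-2\gamma},
\]
which is precisely the claimed excessive function.

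The main obstacle will be checking the hypotheses of Lemma \ref{2789}. The Feller property of $(P^{(3)}_t)$ is standard for Dunkl processes; for $(P^{(2)}_t)$ I would establish the Feller property of $\{\widehat{X},\widehat{\p}_x\}$ from the explicit path construction in Theorem \ref{5372} together with the self-similarity and Feller property of $\{X,\px\}$, exactly as was done for the inverted process in the proof of Corollary \ref{1511}. A point deserving care is that $\omega_k$ vanishes on the reflection hyperplanes, so that $h_1$ and $h_2$ are not strictly positive everywhere on $\mathbb{R}^d\setminus\{0\}$; one checks, however, that their ratio $h=\|x\|^{2-d-2\gamma}$ is positive and continuous there and that the hyperplanes carry no mass for the reference measures, so that the identification of the $h$-transform goes through unaffected.
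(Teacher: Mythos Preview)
Your proposal is correct and follows exactly the route the paper intends: the paper states that the corollary is ``a consequence of Theorem \ref{5372}'' and, by analogy with the free Bessel case (Corollary \ref{1698}), one applies Theorem \ref{5372} with $\alpha=2$ and $\pi=\omega_k$ to obtain the duality measure $\omega_k(x/\|x\|)\|x\|^{2-d}\,dx$, and then combines this with the self-duality in $\omega_k(x)\,dx$ via Lemma \ref{2789} to extract $h(x)=\|x\|^{2-d-2\gamma}$. Your remark that $\omega_k$ vanishes on the reflection hyperplanes, so that the positivity hypothesis of Lemma \ref{2789} is only satisfied off a set of Lebesgue measure zero while the ratio $h$ extends positively and continuously, is a point the paper does not make explicit and is worth keeping.
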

\begin{rem} The function $h$ of Corollary $\ref{2352}$ is always Dunkl-harmonic in the sense that $\Delta_k h = 0$. This follows from 
the form  of the Dunkl Laplacian in polar coordinates, see \cite{Xu}. This is confirmed by the well-known fact that $(h(X_t), t\leq \zeta_c)$ 
is a local martingale which is a true martingale only when $d+2\gamma\leq 2$.
\end{rem}


\newpage

\end{document}